\title{\huge\textbf{New Developments on the Non-Central Chi-Squared and Beta Distributions}\\
\Large{accepted for publication at \textit{Austrian Journal of Statistics} on 06 May 2021}}
\author{
Carlo Orsi\footnote{e-mail: \mail{orsi.carlo@gmail.com}}
}
\newcommand{\mail}[1]{\href{mailto:#1}{\texttt{#1}}}
\newcolumntype{C}[1]{>{\centering\let\newline\\\arraybackslash\hspace{0pt}}m{#1}}
\newcolumntype{L}[1]{>{\raggedright\let\newline\\\arraybackslash\hspace{0pt}}m{#1}}
\newcolumntype{R}[1]{>{\raggedleft\let\newline\\\arraybackslash\hspace{0pt}}m{#1}}
\newtheoremstyle{plain}  
  {\topsep}   
  {\topsep}   
  {\itshape}  
  {}       
  {\bfseries} 
  {}         
  {\newline}  
  {}          
\theoremstyle{plain}
\newtheorem{property}{Property}[section]
\newtheorem{proposition}{\textbf{Proposition}}[section]
\newtheoremstyle{mystyle}  
  {\topsep}   
  {\topsep}   
  {\normalfont}  
  {0pt}       
  {\bfseries} 
  {.}         
  {\newline}  
  {}          
\theoremstyle{mystyle}
\begin{document}
\baselineskip24pt
\maketitle

\begin{abstract}
\baselineskip24pt
New formulas for the moments about zero of the Non-central Chi-Squared and the Non-central Beta distributions are achieved by means of novel approaches. The mixture representation of the former model and a new expansion of the ascending factorial of a binomial are the main ingredients of the first approach, whereas the second one hinges on an interesting relationship of conditional independence and a simple conditional density of the latter model. Then, a simulation study is carried out in order to pursue a twofold purpose: providing numerical validations of the derived moment formulas on one side and discussing the advantages of the new formulas over the existing ones on the other.

\vspace*{0.2cm}
\noindent \textit{Keywords}: moments, mixture representation, ascending factorial, conditional independence, conditional density, hypergeometric functions, simulation, computational efficiency.
\end{abstract}

\section{Introduction}
\label{sec:introduc}

In the present paper new expressions for the $r$-th moment about zero of the Non-central Chi-Squared and the Non-central Beta distributions are obtained. In order to go in due depth into the topics of interest, Section~\ref{sec:prelim} is devoted to recalling the definitions and the main properties of the above mentioned distributions. Instead, the core of the matter is addressed in the subsequent sections as follows. In Section~\ref{sec:form.mom} the problem of computing the $r$-th moment about zero of the Non-central Chi-Squared distribution is faced by resorting to a novel approach based on the mixture representation of such a distribution together with a new expansion of the ascending factorial of a binomial. In Section~\ref{sec:dnc.beta.mom} an approach to the analysis of the Doubly Non-central Beta distribution, i.e. the most general non-central extension of the Beta one, is made explicit. This approach rests on an interesting relationship of conditional independence and a suitable conditional density of such a model. These findings provide an analytical tool-kit that paves the way towards obtaining a surprisingly simple solution to the issue of assessing the moments of such a distribution. Finally, Section~\ref{sec:simul_res} contains a simulation study aimed at discussing the validity of the derived moment formulas and the advantages of these latter formulas over the existing ones. The whole analysis is performed by using the statistical software \texttt{R}. 

\section{Preliminaries}
\label{sec:prelim}

This section is intended to firstly provide the main mathematical tools that will be used in the sequel. More precisely, let $a > 0$ and $\Gamma\left(\cdot\right)$ be the gamma function; we remember that
\begin{equation}
\left(a\right)_l=\frac{\Gamma\left(a+l\right)}{\Gamma\left(a\right)}=\left\{\begin{array}{ll} 1 & \mbox{ if } l=0 \\ a\left(a+1\right)\ldots\left(a+l-1\right) & \mbox{ if } l \in \mathbb{N}\end{array} \right. 
\label{eq:poch.symb}
\end{equation}
denotes the ascending factorial or Pochhammer's symbol of $a$ \cite{JohKemKot05}. By Equation~(\ref{eq:poch.symb}), one has
\begin{equation}
\left(a\right)_{l+m}=\frac{\Gamma\left(a+l+m\right)}{\Gamma\left(a\right)}=\left\{\begin{array}{l} \frac{\Gamma\left(a+l\right)}{\Gamma\left(a\right)} \, \frac{\Gamma\left(a+l+m\right)}{\Gamma\left(a+l\right)}=\left(a\right)_l \, \left(a+l\right)_m \\ \\ \frac{\Gamma\left(a+m\right)}{\Gamma\left(a\right)} \, \frac{\Gamma\left(a+m+l\right)}{\Gamma\left(a+m\right)}=\left(a\right)_m \, \left(a+m\right)_l\end{array}\right.
\label{eq:poch.symb.sum}
\end{equation}
for every $l,m \in \mathbb{N}\cup \{0\}$ and
\begin{equation}
\frac{\left(a\right)_l}{\left(a\right)_m}=\left\{\begin{array}{ll} \left(a+m\right)_{l-m} & \mbox{ if } l>m \\ 1/\left(a+l\right)_{m-l} & \mbox{ if } l<m\end{array} \right. \, . 
\label{eq:poch.symb.ratio}
\end{equation}
Furthermore, the following formula holds for the Pochhammer's symbol of a binomial:
\begin{equation}
\left(a+b\right)_{n}=\sum_{j=0}^{n} {n \choose j} \left(a\right)_{n-j} \left(b\right)_j \, .
\label{eq:poch.symb.binom}
\end{equation}
In view of the foregoing arguments, the generalized hypergeometric function with $p$ numerator parameters and $q$ denominator parameters can be defined as follows:
\begin{equation}
_p^{\, }F_q^{}\left(a_1,\ldots,a_p; b_1,\ldots,b_q; x\right)=\sum_{i=0}^{+\infty} \frac{(a_1)_i \ldots (a_p)_i}{(b_1)_i \ldots (b_q)_i} \frac{x^{i}}{i \, !} \, , \quad x \in \mathbb{R} \, .
\label{eq:fpq}
\end{equation}
For more details on the convergence of the hypergeometric series in Equation~(\ref{eq:fpq}) as well as for further results and properties of $_p^{\, }F_q^{} \, $, the reader is recommended referring to \cite{SriKar85}. An important role is played herein by the special case of Equation~(\ref{eq:fpq}) where both $p$ and $q$ are set equal to 1, namely
\begin{equation}
_1^{\, }F_1^{}\left(a; b; x\right)=\sum_{i=0}^{+\infty} \frac{(a)_i}{(b)_i} \frac{x^{i}}{i \, !} \, , \quad x \in \mathbb{R} \, ,
\label{eq:f11}
\end{equation}
which is known as Kummer's confluent hypergeometric function. Some well-established recurrence relations holding for contiguous values of the parameters of $_1^{\, }F_1^{}$ are of special interest for the subsequent analysis and therefore are recalled in the following:
\begin{equation}
\left(b-a\right) \, _1^{\, }F_1^{}\left(a-1; b; x\right)+\left(2a-b+x\right) \, _1^{\, }F_1^{}\left(a; b; x\right)-a \, _1^{\, }F_1^{}\left(a+1; b; x\right)=0 \, ,
\label{eq:f11.rec.rel1}
\end{equation}
\begin{equation}
b\left(b-1\right) \, _1^{\, }F_1^{}\left(a; b-1; x\right)+b\left(1-b-x\right) \, _1^{\, }F_1^{}\left(a; b; x\right)+x\left(b-a\right) \, _1^{\, }F_1^{}\left(a; b+1; x\right)=0 \, ,
\label{eq:f11.rec.rel2}
\end{equation}
\begin{equation}
b\left(a+x\right) \, _1^{\, }F_1^{}\left(a; b; x\right)+x\left(a-b\right) \, _1^{\, }F_1^{}\left(a; b+1; x\right)-ab \, _1^{\, }F_1^{}\left(a+1; b; x\right)=0 \, ,
\label{eq:f11.rec.rel3}
\end{equation}
\begin{equation}
\left(a-1+x\right) \, _1^{\, }F_1^{}\left(a; b; x\right)+\left(b-a\right) \, _1^{\, }F_1^{}\left(a-1; b; x\right)+\left(1-b\right) \, _1^{\, }F_1^{}\left(a; b-1; x\right)=0 \, 
\label{eq:f11.rec.rel4}
\end{equation}
(see \cite{AbrSte64}, formulas 13.4.1, 13.4.2, 13.4.5, 13.4.6).

That said, let us proceed with the reviewing of the probabilistic models which are under consideration in the present paper. To begin with, we recall the following characterizing property of independent Chi-Squared and, more generally, Gamma random variables \cite{JohKotBal94}. This property is a matter of great consequence for our interests. Thus, it is given the following special emphasis.

\begin{property}[Characterizing property of independent Chi-Squared random variables]
\label{prope:char.prop.chisq}
Let $Y_i$, $i=1,2$, be independent Chi-Squared random variables and $Y^+=Y_1+Y_2$. Then, each of the compositional ratios $Y_i \, / \, Y^+$ is independent of $Y^+$.
\end{property}

In the notation of Property~\ref{prope:char.prop.chisq}, let $2\alpha_i >0$ be the number of degrees of freedom of $Y_i$, $i=1,2$; then, the random variable $X=Y_1 \, / \, Y^+$ is said to have a Beta distribution with shape parameters $\alpha_1, \alpha_2$. We shall denote it by Beta$(\alpha_1, \alpha_2)$. Meaning that a $\chi^{\, 2}_0$ random variable is equal to zero with probability one, a $\mbox{Beta}\left(\alpha_1,0\right)$ random variable with $\alpha_1>0$ is interpreted as degenerate at one, while a $\mbox{Beta}\left(0,\alpha_2\right)$ random variable with $\alpha_2>0$ is interpreted as degenerate at zero. As a consequence of Property~\ref{prope:char.prop.chisq}, the Beta distribution can also be obtained as conditional distribution of $X$ given $Y^+$. The Beta density function takes the form of
$$\mbox{Beta}\left(x;\alpha_1,\alpha_2\right)=\frac{x^{\alpha_1-1} \, \left(1-x\right)^{\alpha_2-1}}{B\left(\alpha_1,\alpha_2\right)} \; , \quad 0<x<1 \, ,$$where $B\left(\alpha_1 \, , \alpha_2\right)=\Gamma\left(\alpha_1\right) \, \Gamma\left(\alpha_2\right)/ \, \Gamma\left(\alpha^+\right)$ is the beta function and $\alpha^+=\alpha_1+\alpha_2$. Finally, in light of Equation~(\ref{eq:poch.symb}), the $r$-th moment about zero of the $\mbox{Beta}\left(\alpha_1,\alpha_2\right)$ distribution can be stated in the form of
\begin{equation}
\mathbb{E}\left[\mbox{Beta}^{\, r} \left(\alpha_1,\alpha_2\right)\right]=\frac{\left(\alpha_1\right)_r}{\left(\alpha^+\right)_r} \; , \quad r \in \mathbb{N} \, .
\label{eq:momr.beta}
\end{equation}
Another probability distribution often mentioned in the following is the Poisson one. Some properties of the latter are listed below by making use of the same notation that will be adopted for our purposes afterwards. Specifically, a discrete random variable $M$ is said to have a Poisson distribution with parameter $\lambda/2 \geq 0$, denoted by $\mbox{Poisson}\left(\lambda/2\right)$, if its probability mass function can be expressed as
$$\mbox{Poisson}\left(j \, ; \, \frac{\lambda}{2}\right)=e^{-\frac{\lambda}{2}} \, \frac{\left(\frac{\lambda}{2}\right)^j}{j !} \; , \quad j \in \mathbb{N} \cup \{0\} \, .$$The case $\lambda=0$ corresponds to a random variable degenerate at zero. If $M \sim \mbox{Poisson}\left(\lambda/2\right)$, then $\mathbb{E}\left(M\right)=\mathbb{V}\left(M\right)=\lambda/2$. The Poisson family is reproductive with respect to its parameter. More precisely, let $\left(M_1,M_2\right)$ be a bivariate random variable the marginals of which are independent with $\mbox{Poisson}\left(\lambda_i \, / \, 2\right)$ distributions, $i=1,2$, respectively; then $M^+=M_1+M_2 \sim \mbox{Poisson}\left(\lambda^+/ \, 2\right)$ with $\lambda^+=\lambda_1+\lambda_2$. Furthermore, it is useful remembering that $\left.M_i \, \right| \, M^+ \sim \mbox{Binomial}\left(M^+,\lambda_i \, / \, \lambda^+\right)$, $i=1,2$. The Poisson distribution is instrumental in characterizing the non-central extension of the Chi-Squared model \cite{JohKotBal95}. As a matter of fact, a Non-central Chi-Squared random variable $Y'$ with $g>0$ degrees of freedom and non-centrality parameter $\lambda \ge 0$, denoted by $\chi'^{\,2}_g \left(\lambda \right)$, admits the following mixture representation:
\begin{equation}
Y' \sim \chi'^{\,2}_g \left(\lambda \right) \qquad \Leftrightarrow \qquad Y'\,| \, M \; \sim \; \chi^{\, 2}_{g+2M} \, , \qquad \mbox{where} \; \; M \sim \mbox{Poisson}\left(\frac{\lambda}{2}\right) \, ,
\label{eq:mixrepres.ncchisq}
\end{equation}
the case $\lambda=0$ corresponding to the $\chi^{\, 2}_g$ distribution. Moreover, a random variable $Y' \sim \chi'^{\,2}_g \left(\lambda \right)$ can be additively decomposed into a central part with $g$ degrees of freedom and a purely non-central part with non-centrality parameter $\lambda$, as follows:
\begin{equation}
Y'=Y+\sum_{j=1}^{M}F_j \, ,
\label{eq:sumrepres.ncchisq}
\end{equation}
where:
\begin{itemize}
\item[i)] $Y$, $M$, $\left\{F_j\right\}$ are mutually independent,
\item[ii)] $Y \sim \chi^{\, 2}_g$, $M \sim \mbox{\normalfont{Poisson}}\left(\lambda/2\right)$ and $\left\{F_j\right\}$ is a sequence of independent random variables with $\chi^{\, 2}_2$ distribution.  
\end{itemize}
By virtue of Equation~(\ref{eq:sumrepres.ncchisq}), the random variable $Y'_{pnc}=\sum_{j=1}^{M}F_j$ is said to have a Purely Non-central Chi-Squared distribution with non-centrality parameter $\lambda$. Indeed, we shall denote it by $\chi'^{\,2}_0 \left(\lambda \right)$, its number of degrees of freedom being equal to zero \cite{Sie79}. The Non-central Chi-Squared distribution is reproductive with respect to both the number of degrees of freedom and the non-centrality parameter \cite{JohKotBal95}. Specifically, if $Y'_1,\ldots,Y'_m$ are independent with $\chi'^{\, 2}_{g_j}(\lambda_j)$ distributions, $j=1,\ldots,m$, then $Y'^{+}=\sum_{j=1}^m Y'_j \sim \chi'^{\, 2}_{g^+}(\lambda^+)$ with $g^+=\sum_{j=1}^m g_j$ and $\lambda^+=\sum_{j=1}^m \lambda_j$.

Finally, the $r$-th moment about zero of $Y' \sim \chi'^{\,2}_g \left(\lambda \right)$, $g>0$, can be computed with the following formula \cite{JohKotBal95}:
\begin{equation}
\mathbb{E}\left(\chi'^{\,2}_g \left(\lambda \right) \right)^r=2^r \, \Gamma\left(r+\frac{g}{2}\right) \sum_{j=0}^{r} {r \choose j} \frac{\left(\frac{\lambda}{2}\right)^j}{\Gamma\left(j+\frac{g}{2}\right)} \, .
\label{eq:mom.literat.ncchisq}
\end{equation}

The Non-central Chi-Squared distribution represents the main ingredient for the definition of the most general non-central extension of the Beta model, known as Doubly Non-central Beta model \cite{JohKotBal95}. The latter is defined by replacing the two independent Chi-Squared random variables involved in the definition of the Beta model by two independent Non-central Chi-Squareds. More precisely, a random variable is said to have a Doubly Non-central Beta distribution with shape parameters $\alpha_1, \, \alpha_2$ and non-centrality parameters $\lambda_1, \, \lambda_2$, denoted by $\mbox{\normalfont{DNcB}}\left(\alpha_1,\alpha_2,\lambda_1,\lambda_2\right)$, if it is distributed as $X'=Y'_1/\left(Y'_1+Y'_2\right)$, the $Y'_i \,$'s being independent with $\chi'^{\,2}_{2\alpha_i}\left(\lambda_i\right)$ distributions, $i=1,2$. Clearly, the case $\lambda_1=\lambda_2=0$ corresponds to the Beta distribution. Moreover, the special case of $X'$ where $\alpha_1=\alpha_2=0$ leads to the compositional ratio $X'_{pnc}$ of two Purely Non-central Chi-Squared independent random variables with non-centrality parameters $\lambda_1$, $\lambda_2$. In light of Equation~(\ref{eq:mixrepres.ncchisq}), the Doubly Non-central Beta model can be characterized by the following mixture representation:
\begin{eqnarray}
X' \sim \mbox{\normalfont{DNcB}}\left(\alpha_1,\alpha_2,\lambda_1,\lambda_2\right) & \quad \Leftrightarrow \quad & X'\,| \, \left(M_1,M_2\right) \; \sim \; \mbox{Beta}(\alpha_1+M_1, \alpha_2 + M_2) \nonumber \\
&  & \mbox{where} \; \; M_i \stackrel{\mbox{\tiny ind}}{\sim } \mbox{Poisson}\left(\frac{\lambda_i}{2}\right)  \; \; i=1,2 \, .
\label{eq:mixrepres.dnc}
\end{eqnarray}
By Equation~(\ref{eq:mixrepres.dnc}), the $\mbox{\normalfont{DNcB}}$ density can be stated as
\begin{eqnarray}
\lefteqn{\mbox{\normalfont{DNcB}}\left(x;\alpha_1,\alpha_2,\lambda_1,\lambda_2\right)= \qquad \qquad \qquad \qquad \qquad \qquad 0<x<1}\nonumber \\
& = & \sum_{j=0}^{+\infty} \sum_{k=0}^{+\infty} \left[ \mbox{\normalfont{Poisson}}\left(j \, ; \, \frac{\lambda_1}{2}\right) \mbox{\normalfont{Poisson}}\left(k \, ; \, \frac{\lambda_2}{2}\right) \mbox{\normalfont{Beta}}\left(x; \alpha_1+j,\alpha_2+k\right)\right] 
\label{eq:dens.beta.dnc}
\end{eqnarray}
and can be accordingly written as the following perturbation of the Beta density:
\begin{eqnarray}
\lefteqn{\mbox{\normalfont{DNcB}}\left(x;\alpha_1,\alpha_2,\lambda_1,\lambda_2\right)= \qquad \qquad \qquad \qquad 0<x<1}\nonumber \\
& = & \mbox{\normalfont{Beta}}\left(x; \alpha_1,\alpha_2 \right) \cdot e^{-\frac{\lambda^+}{2}} \,  \Psi_2\left[\alpha^+;\alpha_1,\alpha_2;\frac{\lambda_1}{2}x,\frac{\lambda_2 }{2}\left(1-x\right)\right] \; ,
\label{eq:c.ncb}
\end{eqnarray}
\cite{OngOrs15} where
\begin{equation}
\Psi_2\left[a;b_1,b_2;x,y\right]=\sum_{j=0}^{+\infty}\sum_{k=0}^{+\infty}\frac{(a)_{j+k}}{(b_1)_j \, (b_2)_k}  \frac{x^j}{j!}\frac{y^k}{k!}, \quad x,y \geq 0
\label{eq:perturb.ncb}
\end{equation}
is the Humbert's confluent hypergeometric function \cite{SriKar85}. Unfortunately, the perturbation representation of the DNcB density in Equation~(\ref{eq:c.ncb}) is not so easily tractable and interpretable. Indeed, regardless of the constant term, the Beta density is perturbed by a function in two variables given by the sum of the double power series in Equation~(\ref{eq:perturb.ncb}).

The special case of the DNcB model where $\lambda_2$ is set equal to 0 and $\lambda_1$ is renamed in $\lambda$ is called Type I Non-central Beta distribution and is denoted by $X'_1 \sim \mbox{\normalfont{NcB1}}\left(\alpha_1,\alpha_2,\lambda\right)$. The NcB1 density takes the form of
$$\mbox{\normalfont{NcB1}}\left(x;\alpha_1,\alpha_2,\lambda\right)=\sum_{j=0}^{+\infty} \left[\mbox{\normalfont{Poisson}}\left(j \, ; \, \frac{\lambda}{2}\right) \mbox{\normalfont{Beta}}\left(x; \alpha_1+j,\alpha_2\right)\right]  \, ,\quad 0<x<1$$
and can be equivalently written as
$$\mbox{\normalfont{NcB1}}\left(x;\alpha_1,\alpha_2,\lambda\right)= \mbox{\normalfont{Beta}}\left(x; \alpha_1,\alpha_2 \right) \cdot e^{-\frac{\lambda}{2}} \, _1 F_1\left(\alpha^+;\alpha_1;\frac{\lambda}{2} \, x\right) \, ;$$
moreover, the following formula holds for the raw moments of a $\mbox{\normalfont{NcB1}}\left(\alpha_1,\alpha_2,\lambda\right)$ random variable:
\begin{equation}
\mathbb{E}\left[\mbox{\normalfont{NcB1}}^{\, r}\left(\alpha_1,\alpha_2,\lambda\right)\right]=\mathbb{E}\left[\mbox{Beta}^{\, r} \left(\alpha_1,\alpha_2\right)\right] \, e^{-\frac{\lambda}{2}} \, _2F_2\left(\alpha_1+r,\alpha^+;\alpha_1, \alpha^++r;\frac{\lambda}{2}\right) \, , \; \; r \in \mathbb{N} \, .
\label{eq:momr.beta.nc1.def}
\end{equation}
The Type II Non-central Beta distribution, denoted by $\mbox{\normalfont{NcB2}}\left(\alpha_1,\alpha_2,\lambda\right)$, is the special case of the $\mbox{\normalfont{DNcB}}$ model given by the distribution of $1-X'_1$ where $X'_1$ follows a $\mbox{\normalfont{NcB1}}$ distribution with reversed shape parameters. The $r-$th moment about zero of a $\mbox{\normalfont{NcB2}}\left(\alpha_1,\alpha_2,\lambda\right)$ random variable can be expressed as:
\begin{equation}
\mathbb{E}\left[\mbox{\normalfont{NcB2}}^{\, r}\left(\alpha_1,\alpha_2,\lambda\right)\right]=\mathbb{E}\left[\mbox{Beta}^{\, r} \left(\alpha_1,\alpha_2\right)\right]  \, e^{-\frac{\lambda}{2}} \, _1F_1\left(\alpha^+;\alpha^++r;\frac{\lambda}{2}\right) \, , \; \; r \in \mathbb{N} \, .
\label{eq:momr.beta.nc2}
\end{equation}
See \cite{NadGup04} for more details of these two latter models.

\section{On the moments of the Non-central Chi-Squared distribution}
\label{sec:form.mom}

In the present section the focus is on the derivation of a new general expression for the raw moments of the Non-central Chi-Squared distribution. This new formula is derived regardless of the standard one in Equation~(\ref{eq:mom.literat.ncchisq}) by means of a novel conditional approach. This approach makes use of the mixture representation of the above mentioned distribution as well as the following simple expansion of the ascending factorial of a binomial, which, to our knowledge, has never been discussed in the literature.

\begin{proposition}[Expansion of the ascending factorial of a binomial]
\label{prop:expans.poch.symb.binom}
Let $a, \, b > 0$. Then, for every $l \in \mathbb{N} \cup \{0\}$:
\begin{equation}
\left(a+b\right)_l=\sum_{i=0}^{l}\frac{1}{i!}\left[\frac{d^{\, i}}{d a^i} \left(a\right)_l\right] \, b^i,
\label{eq:expans.poch.symb.binom}
\end{equation}
where $d^{\, i} f/d a^i$ denotes the $i$-th derivative of $f$ with respect to $a$ (the case $i=0$ corresponding to $f$) and $\left(a\right)_l$ is defined as in Equation~(\ref{eq:poch.symb}).
\end{proposition}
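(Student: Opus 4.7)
The plan is to view $(a)_l$ as a polynomial in the single variable $a$ and then invoke the fact that Taylor's formula is exact for polynomials. From the product expression in Equation~(\ref{eq:poch.symb}), namely $(a)_l = a(a+1)\cdots(a+l-1)$, one sees that $P(a) := (a)_l$ is a polynomial in $a$ of degree exactly $l$, with leading coefficient one. Moreover, by the same definition, $(a+b)_l$ is obtained from $(a)_l$ by the substitution $a \mapsto a+b$, so that $(a+b)_l = P(a+b)$.

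For any polynomial $P$ of degree $l$, the Taylor expansion around the point $a$ is exact and terminates after the term of order $l$, because derivatives of order greater than $l$ vanish identically:
$$P(a+b) = \sum_{i=0}^{l} \frac{P^{(i)}(a)}{i!}\, b^i.$$
Substituting $P(a) = (a)_l$ into this identity yields precisely Equation~(\ref{eq:expans.poch.symb.binom}), which is what we wanted to establish.

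There is essentially no analytic obstacle: the only observation needed is that $(a)_l$ is a polynomial of degree $l$ in $a$, which is immediate from its definition, so convergence issues do not arise and the series in~(\ref{eq:expans.poch.symb.binom}) is automatically finite. Two alternative routes would be available if one preferred a purely algebraic argument: (i) a short induction on $l$ using the recursion $(a)_{l+1} = (a+l)\,(a)_l$ together with Leibniz's rule for the derivative of a product, or (ii) expanding $(a)_l$ in powers of $a$ via the unsigned Stirling numbers of the first kind and then applying the ordinary binomial theorem to $(a+b)^k$. Both routes reproduce the same identity; the Taylor-series viewpoint, however, is the most transparent and needs no auxiliary combinatorial machinery.
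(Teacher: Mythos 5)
Your proof is correct, and it takes a genuinely different and cleaner route than the paper's. You observe that $\left(a\right)_l$ is a polynomial of degree $l$ in $a$ and that $\left(a+b\right)_l$ is its evaluation at $a+b$, so the finite Taylor expansion of a polynomial about the point $a$ gives the identity in one line; the finiteness of the sum is automatic because all derivatives of order greater than $l$ vanish. The paper instead proceeds by an iterative expansion: it peels off one factor at a time via $\left(a+b\right)_l=\left(a+b\right)\left(a+1+b\right)_{l-1}$, verifies by hand for $j=1,2,3$ that the accumulated prefactor equals $\sum_{i=0}^{j}\frac{1}{i!}\left[\frac{d^{\,i}}{da^i}\left(a\right)_j\right]b^i$, and then asserts the general step $\left(a+b\right)_l=\left\{\sum_{i=0}^{j}\frac{1}{i!}\left[\frac{d^{\,i}}{da^i}\left(a\right)_j\right]b^i\right\}\left(a+j+b\right)_{l-j}$ before setting $j=l$. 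Your argument buys brevity and transparency, and it makes clear why the coefficients are exactly the scaled derivatives rather than merely matching them pattern by pattern; the paper's approach buys self-containedness (it never invokes Taylor's theorem, only the multiplicativity of the Pochhammer symbol), at the cost of an inductive step that is illustrated rather than fully formalized. Your two suggested alternatives (induction via $\left(a\right)_{l+1}=\left(a+l\right)\left(a\right)_l$ with the Leibniz rule, or expansion through unsigned Stirling numbers of the first kind plus the binomial theorem) are also sound and the first is essentially a rigorous rendering of the paper's iteration.
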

\begin{proof}
In view of Equation~(\ref{eq:poch.symb}), Equation~(\ref{eq:expans.poch.symb.binom}) holds trivially true in the case where $l$ is set equal to 0. By Equation~(\ref{eq:poch.symb.sum}), for $l \ge 1$ one has
\begin{equation}
\left(a+b\right)_l=\left(a+b\right)_{1+\left(l-1\right)}=\left(a+b\right) \, \left(a+1+b\right)_{l-1} \, ,
\label{eq:expans.poch.symb.binom.dim1}
\end{equation}
which can be equivalently rewritten as
\begin{equation}
\left(a+b\right)_l=\left\{\sum_{i=0}^{1}\frac{1}{i!}\left[\frac{d^i}{d a^i} \left(a\right)_1\right] \, b^i \right\} \left(a+1+b\right)_{l-1} \, ;
\label{eq:expans.poch.symb.binom.dim1a}
\end{equation}
therefore, by taking $l=1$ in Equation~(\ref{eq:expans.poch.symb.binom.dim1a}), Equation~(\ref{eq:expans.poch.symb.binom}) is established in this latter case. For $l \ge 2$, Equation~(\ref{eq:expans.poch.symb.binom.dim1}) can be further expanded in the following way:
\begin{eqnarray}
\left(a+b\right)_l & = & \left(a+b\right) \, \left(a+1+b\right)_{1+\left(l-2\right)}=\left(a+b\right) \, \left(a+1+b\right) \, \left(a+2+b\right)_{l-2}=\nonumber\\
& = & \left\{a\left(a+1\right)+\left[a+\left(a+1\right)\right]b+b^2\right\} \, \left(a+2+b\right)_{l-2} \, .
\label{eq:expans.poch.symb.binom.dim2}
\end{eqnarray}
Observe that Equation~(\ref{eq:expans.poch.symb.binom.dim2}) can be analogously restated as
\begin{equation}
\left(a+b\right)_l=\left\{\sum_{i=0}^{2}\frac{1}{i!}\left[\frac{d^i}{d a^i} \left(a\right)_2\right] \, b^i \right\} \left(a+2+b\right)_{l-2} \, ,
\label{eq:expans.poch.symb.binom.dim2a}
\end{equation}
so that, by placing $l=2$ in Equation~(\ref{eq:expans.poch.symb.binom.dim2a}), Equation~(\ref{eq:expans.poch.symb.binom}) holds true also in this latter case. By expanding Equation~(\ref{eq:expans.poch.symb.binom.dim2}) along the same lines, for $l \ge 3$ one obtains:
\begin{eqnarray}
\left(a+b\right)_l & = & \left\{a\left(a+1\right)+\left[a+\left(a+1\right)\right]b+b^2\right\} \, \left(a+2+b\right)_{1+\left(l-3\right)}=\nonumber\\
& = & \left\{a\left(a+1\right)+\left[a+\left(a+1\right)\right]b+b^2\right\} \, \left(a+2+b\right) \, \left(a+3+b\right)_{l-3}=\nonumber\\
& = & \left\{a\left(a+1\right)\left(a+2\right)+\left[a\left(a+1\right)+a\left(a+2\right)+\left(a+1\right)\left(a+2\right)\right]b \, +\right.\nonumber\\
& + & \left.\left[a+\left(a+1\right)+\left(a+2\right)\right]b^2+b^3\right\} \, \left(a+3+b\right)_{l-3}.
\label{eq:expans.poch.symb.binom.dim3}
\end{eqnarray}
Again note that Equation~(\ref{eq:expans.poch.symb.binom.dim3}) can be rewritten as follows:
\begin{equation}
\left(a+b\right)_l=\left\{\sum_{i=0}^{3}\frac{1}{i!}\left[\frac{d^i}{d a^i} \left(a\right)_3\right] \, b^i \right\} \left(a+3+b\right)_{l-3} \, ;
\label{eq:expans.poch.symb.binom.dim3a}
\end{equation}
therefore, the special case of Equation~(\ref{eq:expans.poch.symb.binom.dim3a}) where $l$ is set equal to 3 leads to the proof of Equation~(\ref{eq:expans.poch.symb.binom}) also in this latter case. By iteratively repeating the foregoing reasonings, at the $j$-th step ($l \geq j$) one has
\begin{equation}
\left(a+b\right)_l=\left\{\sum_{i=0}^{j}\frac{1}{i!}\left[\frac{d^i}{d a^i} \left(a\right)_j\right] \, b^i \right\} \left(a+j+b\right)_{l-j} \, ;
\label{eq:expans.poch.symb.binom.dimj}
\end{equation}
therefore, by setting $j=l$ in Equation~(\ref{eq:expans.poch.symb.binom.dimj}), Equation~(\ref{eq:expans.poch.symb.binom}) is established for any $l \in \mathbb{N} \cup \{0\}$.
\end{proof}
As mentioned before, Proposition~\ref{prop:expans.poch.symb.binom} and the mixture representation in Equation~(\ref{eq:mixrepres.ncchisq}) lead us to the finding of the following new general formula for the moments about zero of the Non-central Chi-Squared distribution.

\begin{proposition}[Moments about zero of the $\chi'^{\,2}_g \left(\lambda \right)$ distribution]
\label{prop:mom.ncchisq}
For every $r \in \mathbb{N}$, the $r$-th moment about zero of the $\chi'^{\,2}_g \left(\lambda \right)$ distribution with $g>0$ can be written as
\begin{equation}
\mathbb{E}\left(\chi'^{\,2}_g \left(\lambda \right) \right)^r=2^{\, r} \, \sum_{i=0}^{r}\sum_{j=0}^{i} \mathcal{S}\left(i,j\right) \frac{1}{i!}\left[\frac{d^{\, i}}{d h^i} \left(h\right)_r\right] \left(\frac{\lambda}{2}\right)^j \, ,
\label{eq:mom.ncchisq}
\end{equation}
where $\mathcal{S}\left(i,j\right)$ is a Stirling number of the second kind, $h=g/2$, $d^{\, i} f/d h^i$ denotes the $i$-th derivative of $f$ with respect to $h$ (the case $i=0$ corresponding to $f$) and $\left(h\right)_r$ is defined as in Equation~(\ref{eq:poch.symb}).
\end{proposition}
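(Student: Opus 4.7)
The plan is to exploit the mixture representation in Equation~(\ref{eq:mixrepres.ncchisq}) by conditioning on the Poisson mixing variable, and then reduce the resulting expectation to two well-known ingredients: the expansion of the ascending factorial of a binomial given in Proposition~\ref{prop:expans.poch.symb.binom}, and Touchard's identity for the raw moments of a Poisson random variable in terms of Stirling numbers of the second kind.

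First I would write
\begin{equation*}
\mathbb{E}\left(\chi'^{\,2}_g(\lambda)\right)^r=\mathbb{E}\!\left[\mathbb{E}\!\left(\chi'^{\,2}_g(\lambda)\right)^r\,\big|\,M\right]
\end{equation*}
and use the standard expression for the raw moments of a central Chi-Squared, namely $\mathbb{E}(\chi^{\,2}_{g+2M})^r=2^r\,(h+M)_r$ with $h=g/2$, which follows at once from Equation~(\ref{eq:poch.symb}) and the Gamma-function form of the central Chi-Squared moment. This reduces the problem to evaluating $2^r\,\mathbb{E}\left[(h+M)_r\right]$ where $M\sim\mathrm{Poisson}(\lambda/2)$.

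Next I would apply Proposition~\ref{prop:expans.poch.symb.binom} with $a=h$ and $b=M$ to obtain
\begin{equation*}
(h+M)_r=\sum_{i=0}^{r}\frac{1}{i!}\left[\frac{d^{\,i}}{dh^{i}}(h)_r\right]M^{i},
\end{equation*}
and then take expectation term by term (the sum is finite, so exchanging sum and expectation is immediate). It then remains to substitute the Touchard/Dobi\'nski identity
\begin{equation*}
\mathbb{E}(M^{i})=\sum_{j=0}^{i}\mathcal{S}(i,j)\left(\frac{\lambda}{2}\right)^{j},\qquad M\sim\mathrm{Poisson}(\lambda/2),
\end{equation*}
into the resulting expression. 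Collecting the factor $2^{r}$ yields Equation~(\ref{eq:mom.ncchisq}) directly.

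I do not expect any serious obstacle: once the conditional expectation has been carried out, the proof is essentially a two-line assembly of Proposition~\ref{prop:expans.poch.symb.binom} and the Touchard formula. The only point that deserves a brief justification is the Touchard identity, which one can either cite as standard or establish in one line by noting that $M^{i}=\sum_{j}\mathcal{S}(i,j)\,(M)_{j}^{\downarrow}$ (expansion of powers in falling factorials) and that $\mathbb{E}\!\left[(M)_{j}^{\downarrow}\right]=(\lambda/2)^{j}$ for a Poisson distribution; alternatively, differentiating the moment generating function $\exp\!\left[(\lambda/2)(e^{t}-1)\right]$ $i$ times at $t=0$ gives the same conclusion via Fa\`a di Bruno's formula. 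No further manipulation is needed.
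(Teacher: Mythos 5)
Your proposal is correct and follows essentially the same route as the paper's own proof: law of iterated expectations via the mixture representation, the conditional moment $2^r(h+M)_r$, the binomial ascending-factorial expansion of Proposition~\ref{prop:expans.poch.symb.binom} with $a=h$, $b=M$, and the Touchard formula for the Poisson raw moments. No gaps; the only difference is that you sketch a justification of the Touchard identity, which the paper simply cites.
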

\begin{proof}
In the notation of Equation~(\ref{eq:mixrepres.ncchisq}), by the law of iterated expectations, one has $\mathbb{E}\left[\left(Y' \right)^r \right]=\mathbb{E}_M\left\{\mathbb{E}\left[\left.\left(Y' \right)^r\right|M \right]\right\}$. In view of the general formula for the moments about zero of the Gamma distribution \cite{JohKotBal94}, one obtains $\mathbb{E}\left[\left.\left(Y' \right)^r\right|M \right]=2^{\, r} \left(h+M\right)_r \,$; therefore:
\begin{equation}
\mathbb{E}\left[\left(Y' \right)^r \right]=2^r \, \mathbb{E}\left[\left(h+M\right)_r\right].
\label{eq:mom.ncchisq.dim1}
\end{equation}
By virtue of the special case of Equation~(\ref{eq:expans.poch.symb.binom}) where $a=h$, $b=M$ and $l=r$, Equation~(\ref{eq:mom.ncchisq.dim1}) can be rewritten as
$$\mathbb{E}\left[\left(Y' \right)^r \right]=2^{\, r} \, \sum_{i=0}^{r} \frac{1}{i!}\left[\frac{d^i}{d h^i} \left(h\right)_r\right] \mathbb{E}\left(M^i\right),$$
where, by the general formula for the moments about zero of the Poisson distribution \cite{JohKemKot05}, one has:
\begin{equation}
\mathbb{E}\left(M^i\right)=\sum_{j=0}^{i} \mathcal{S}\left(i,j\right) \left(\frac{\lambda}{2}\right)^j, \qquad i \in \mathbb{N},
\label{eq:mom.pois}
\end{equation}
$\mathcal{S}\left(i,j\right)$ being a Stirling number of the second kind. Hence, Equation~(\ref{eq:mom.ncchisq}) is established.
\end{proof}
We can thus recover the raw moments of order $r=1,2,3,4$ of the $\chi'^{\,2}_g \left(\lambda \right)$ distribution as special cases of Equation~(\ref{eq:mom.ncchisq}):
\begin{equation}
\mathbb{E}\left(\chi'^{\,2}_g \left(\lambda \right)\right)=2\left[\mathcal{S}\left(0,0\right) \, h+\mathcal{S}\left(1,1\right) \left(\lambda/2\right)\right]=g+\lambda  \, ,
\label{eq:mom1.ncchisq}
\end{equation}
\begin{eqnarray}
\lefteqn{\mathbb{E}\left(\chi'^{\,2}_g \left(\lambda \right) \right)^2=} \nonumber \\
& = & 4\left\{\mathcal{S}\left(0,0\right) \, h\left(h+1\right)+\left[\, \mathcal{S}\left(1,1\right) \left(2h+1\right)+\mathcal{S}\left(2,1\right)\right]\left(\lambda/2\right)+\mathcal{S}\left(2,2\right)\left(\lambda/2\right)^2 \right\} \; \nonumber \\
& = & g\left(g+2\right)+2\left(g+2\right)\lambda+\lambda^2 \, ,
\label{eq:mom2.ncchisq}
\end{eqnarray}
\begin{eqnarray}
\lefteqn{\mathbb{E}\left(\chi'^{\,2}_g \left(\lambda \right) \right)^3=}\nonumber \\
& = & 8\left\{\mathcal{S}\left(0,0\right) h \left(h+1\right)\left(h+2\right)+\left[\mathcal{S}\left(1,1\right) \left(3h^2+6h+2\right)+3 \, \mathcal{S}\left(2,1\right) \left(h+1\right) \right. \right. \quad \nonumber \\
& + & \left. \left. \mathcal{S}\left(3,1\right)\right] \left(\lambda/2\right)+\left[3 \, \mathcal{S}\left(2,2\right) \left(h+1\right)+\mathcal{S}\left(3,2\right)\right]\left(\lambda/2\right)^2+\mathcal{S}\left(3,3\right)\left(\lambda/2\right)^3 \right\}= \nonumber \\
& = & g\left(g+2\right)\left(g+4\right)+3\left(g+2\right)\left(g+4\right)\lambda+3\left(g+4\right)\lambda^2+\lambda^3 \, , 
\label{eq:mom3.ncchisq}
\end{eqnarray}
\begin{eqnarray}
\lefteqn{\mathbb{E}\left(\chi'^{\,2}_g \left(\lambda \right) \right)^4=} \nonumber \\
& = & 16\left\{\mathcal{S}\left(0,0\right) \, h \left(h+1\right)\left(h+2\right)\left(h+3\right)+ \left[2 \, \mathcal{S}\left(1,1\right) \left(2h^3+9h^2+11h+3\right)+ \right. \right. \; \; \nonumber \\
& + & \left. \mathcal{S}\left(2,1\right) \, \left(6h^2+18h+11\right)+2 \, \mathcal{S}\left(3,1\right)\left(2h+3\right)+\mathcal{S}\left(4,1\right)\right]\left(\lambda/2\right)+\nonumber \\
& + & \left[\mathcal{S}\left(2,2\right)\left(6h^2+18h+11\right)+2 \, \mathcal{S}\left(3,2\right)\left(2h+3\right)+\mathcal{S}\left(4,2\right)\right]\left(\lambda/2\right)^2+ \nonumber \\
& + & \left. \left[2 \, \mathcal{S}\left(3,3\right)\left(2h+3\right)+\mathcal{S}\left(4,3\right)\right]\left(\lambda/2\right)^3+\mathcal{S}\left(4,4\right)\left(\lambda/2\right)^4\right\}=\nonumber \\
& = & g\left(g+2\right)\left(g+4\right)\left(g+6\right)+4\left(g+2\right)\left(g+4\right)\left(g+6\right)\lambda+6\left(g+4\right)\left(g+6\right)\lambda^2 \nonumber \\
& + & 4\left(g+6\right)\lambda^3+\lambda^4 \, .
\label{eq:mom4.ncchisq}
\end{eqnarray}
However, neither the moment formula available in the literature nor the above derived one apply in case of number of degrees of freedom equal to zero. As far as the computation of the raw moments of the Purely Non-central Chi-Squared distribution is concerned, the following formula can be used.
\begin{proposition}[Moments about zero of the $\chi'^{\,2}_0 \left(\lambda \right)$ distribution]
\label{prop:mom.ncchisq.zero}
For every $r \in \mathbb{N}$, the $r$-th moment about zero of the $\chi'^{\,2}_0 \left(\lambda \right)$ distribution can be written as
\begin{equation}
\mathbb{E}\left(\chi'^{\,2}_0 \left(\lambda \right) \right)^r =2^{\, r} \, \sum_{i=0}^{r}\sum_{j=0}^{i} \left|s\left(r,i\right)\right|\mathcal{S}\left(i,j\right) \left(\frac{\lambda}{2}\right)^j \, ,
\label{eq:mom.ncchisq.zero}
\end{equation}
where $\left|s\left(r,i\right)\right|$ is an unsigned Stirling number of the first kind and $\mathcal{S}\left(i,j\right)$ is a Stirling number of the second kind.
\end{proposition}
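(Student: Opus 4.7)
The plan is to mirror the conditional strategy used in the proof of Proposition~\ref{prop:mom.ncchisq}, but to replace the binomial ascending--factorial expansion (Proposition~\ref{prop:expans.poch.symb.binom}), which becomes degenerate at $a=h=0$, by the classical expansion of a pure ascending factorial in powers of its argument via (unsigned) Stirling numbers of the first kind.

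First, I would specialize the mixture representation in Equation~(\ref{eq:mixrepres.ncchisq}) to $g=0$, so that, conditionally on $M \sim \mbox{Poisson}(\lambda/2)$, the random variable $Y'_{pnc}$ is distributed as $\chi^{\,2}_{2M}$ (with the convention that $\chi^{\,2}_0$ is degenerate at zero, which is consistent because then $(M)_r=0$ for $M=0$ and $r\ge 1$). By the law of iterated expectations,
\begin{equation*}
\mathbb{E}\left[(Y'_{pnc})^r\right]=\mathbb{E}_M\left\{\mathbb{E}\left[\left.(Y'_{pnc})^r\right|M\right]\right\}.
\end{equation*}
Using the standard gamma moment formula (as in the proof of Proposition~\ref{prop:mom.ncchisq}, but now with $h=0$ replaced by the random $M$), one has $\mathbb{E}\left[\left.(Y'_{pnc})^r\right|M\right]=2^r\,(M)_r$, so
\begin{equation*}
\mathbb{E}\left[(Y'_{pnc})^r\right]=2^r\,\mathbb{E}\left[(M)_r\right].
\end{equation*}

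Next, I would invoke the well-known identity expressing the ascending factorial as a polynomial in its argument with coefficients given by unsigned Stirling numbers of the first kind:
\begin{equation*}
(M)_r=\sum_{i=0}^{r}\left|s(r,i)\right|\,M^i.
\end{equation*}
Substituting this and taking expectation inside the finite sum yields
\begin{equation*}
\mathbb{E}\left[(Y'_{pnc})^r\right]=2^r\sum_{i=0}^{r}\left|s(r,i)\right|\,\mathbb{E}(M^i).
\end{equation*}
Finally, plugging in the Poisson raw moments given by Equation~(\ref{eq:mom.pois}), namely $\mathbb{E}(M^i)=\sum_{j=0}^{i}\mathcal{S}(i,j)(\lambda/2)^j$, delivers Equation~(\ref{eq:mom.ncchisq.zero}).

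There is no real obstacle here: the proof is structurally identical to that of Proposition~\ref{prop:mom.ncchisq}, with the only genuinely new ingredient being the Stirling-first-kind expansion of $(M)_r$, which is a textbook combinatorial identity and therefore does not require a separate argument. The mildly delicate point worth explicitly checking is merely the consistency of the $M=0$ term with the degeneracy of $\chi^{\,2}_0$ at zero, which is automatic because $(0)_r=0$ for every $r\ge 1$ and both sides of the claimed formula agree for $r=0$ by convention.
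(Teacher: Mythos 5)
Your proof is correct and follows exactly the route the paper takes: set $h=0$ in $\mathbb{E}\left[\left(Y'\right)^r\right]=2^r\,\mathbb{E}\left[\left(h+M\right)_r\right]$, expand $\left(M\right)_r=\sum_{i=0}^{r}\left|s\left(r,i\right)\right|M^i$ via unsigned Stirling numbers of the first kind, and apply the Poisson moment formula in Equation~(\ref{eq:mom.pois}). Your extra remark on the consistency of the $M=0$ case with the degeneracy of $\chi^{\,2}_0$ is a harmless addition the paper leaves implicit.
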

\begin{proof}
The proof ensues from Equation~(\ref{eq:mom.ncchisq.dim1}) by taking $h=0$ and remembering that $(M)_r=\sum_{i=0}^r \left|s\left(r,i\right)\right|$ $M^i$ where $\left|s\left(r,i\right)\right|=\left(-1\right)^{r-i} s\left(r,i\right)$ and $s\left(r,i\right)$ is a Stirling number of the first kind. By Equation~(\ref{eq:mom.pois}), Equation~(\ref{eq:mom.ncchisq.zero}) is thus established.
\end{proof}
The special cases of Equation~(\ref{eq:mom.ncchisq.zero}) where $r$ is set equal to each of the integers from 1 to 4 lead to the expressions of the first four moments of the $\chi'^{\,2}_0 \left(\lambda \right)$ distribution:
$$\mathbb{E}\left(\chi'^{\,2}_0 \left(\lambda \right) \right)=2\left|s\left(1,1\right)\right| \mathcal{S}\left(1,1\right)=\lambda  \, ,$$
\begin{eqnarray*}
\lefteqn{\mathbb{E}\left(\chi'^{\,2}_0 \left(\lambda \right) \right)^2=}\\
& = & 4\left\{\left[ \, \left|s\left(2,1\right)\right|\mathcal{S}\left(1,1\right)+\left|s\left(2,2\right)\right|\mathcal{S}\left(2,1\right)\right]\left(\lambda/2\right)+\left|s\left(2,2\right)\right|\mathcal{S}\left(2,2\right)\left(\lambda/2\right)^2\right\}= \; \; \\
& = & 4\lambda+\lambda^2  \, , 
\end{eqnarray*}
\begin{eqnarray*}
\lefteqn{\mathbb{E}\left(\chi'^{\,2}_0 \left(\lambda \right) \right)^3=}\\
& = & 8\left\{\left[\, \left|s\left(3,1\right)\right|\mathcal{S}\left(1,1\right)+\left|s\left(3,2\right)\right|\mathcal{S}\left(2,1\right)+\left|s\left(3,3\right)\right|\mathcal{S}\left(3,1\right)\right]\left(\lambda/2\right)+ \right. \\
& + & \left. \left[\, \left|s\left(3,2\right)\right|\mathcal{S}\left(2,2\right)+\left|s\left(3,3\right)\right|\mathcal{S}\left(3,2\right)\right]\left(\lambda/2\right)^2+\left|s\left(3,3\right)\right|\mathcal{S}\left(3,3\right)\left(\lambda/2\right)^3 \right\}=  \\
& = & 24\lambda+12\lambda^2+\lambda^3 \, , 
\end{eqnarray*}
\begin{eqnarray*}
\lefteqn{\mathbb{E}\left(\chi'^{\,2}_0 \left(\lambda \right) \right)^4=}\\
& = & 16\left\{\left[ \, \left|s\left(4,1\right)\right|\mathcal{S}\left(1,1\right)+\left|s\left(4,2\right)\right|\mathcal{S}\left(2,1\right)+\left|s\left(4,3\right)\right|\mathcal{S}\left(3,1\right)+\left|s\left(4,4\right)\right|\mathcal{S}\left(4,1\right)\right] \right. \cdot \\
& \cdot & \left(\lambda/2\right)+ \left[\, \left|s\left(4,2\right)\right|\mathcal{S}\left(2,2\right)+\left|s\left(4,3\right)\right|\mathcal{S}\left(3,2\right)+\left|s\left(4,4\right)\right|\mathcal{S}\left(4,2\right)\right]\left(\lambda/2\right)^2+ \\
& + & \left. \left[ \, \left|s\left(4,3\right)\right|\mathcal{S}\left(3,3\right)+\left|s\left(4,4\right)\right|\mathcal{S}\left(4,3\right) \right]\left(\lambda/2\right)^3+\left|s\left(4,4\right)\right|\mathcal{S}\left(4,4\right)\left(\lambda/2\right)^4 \right\}= \nonumber \\
& = & 192\lambda+144\lambda^2+24\lambda^3+\lambda^4 \, .
\end{eqnarray*}
Observe that the above formulas can be alternatively obtained by taking $g=0$ in Equations~(\ref{eq:mom1.ncchisq}), (\ref{eq:mom2.ncchisq}), (\ref{eq:mom3.ncchisq}), (\ref{eq:mom4.ncchisq}), respectively.

\section{On the moments of the Non-central Beta distributions}
\label{sec:dnc.beta.mom}

The main weakness of the Doubly Non-central Beta distribution lies in its poor tractability from a mathematical standpoint. Despite the growing variety of applications attracted by the present model in recent years (see, for example, \cite{KimSch98} and \cite{Sta15}), the aforementioned drawback of the DNcB distribution poses strong limitations on its use as a model for data consisting of proportions. In this regard, the following Proposition~\ref{propo:dncb.cond.ind} makes explicit a new approach to the study of this class which clears the ground for a deeper analysis of it. This approach, which is just mentioned in the proof of Proposition 7 in \cite{OngOrs15}, has its origin in the following realization: in view of the arguments recalled in Section~\ref{sec:introduc}, Property~\ref{prope:char.prop.chisq} is no longer valid in the non-central setting. Hence, an interesting generalization of the latter to the non-central framework is made clear herein. Specifically, in a suitable conditional form, a Doubly Non-central Beta random variable $X'$ is independent of the sum of the two Non-central Chi-Squareds $Y'_1$ and $Y'_2$ involved in its definition. More precisely, this relationship applies conditionally on the sum $M^+$ of the two Poisson random variables on which both $X'$ and $Y'_1+Y'_2$ depend. The distribution of $X'$ given $M^+$ is also clarified.   

\begin{proposition}[Conditional independence]
\label{propo:dncb.cond.ind}
Let $X' \sim \mbox{\normalfont{DNcB}}\left(\alpha_1,\alpha_2,\lambda_1,\lambda_2\right)$ and $Y'_i$, $i=1,2$, be independent $\chi'^{\,2}_{2\alpha_i}\left(\lambda_i\right)$ random variables, with $Y'^+=Y'_1+Y'_2$. Furthermore, let $M_i$, $i=1,2$, be independent Poisson random variables with means $\lambda_i \, / \, 2$ and $M^+=M_1+M_2$. Then:
\begin{itemize}
\item[i)] $X'$ and $Y'^+$ are conditionally independent given $M^+$
\item[ii)] the density of $X'$ given $M^+$ is:
\begin{eqnarray}
\lefteqn{f_{\left.X' \, \right|\, M^+}\left(x\right)=}\nonumber \\
& = & \sum_{i=0}^{M^+} \left[\mbox{\normalfont{Binomial}}\left(i;M^+,\frac{\lambda_1}{\lambda^+}\right) \, \mbox{\normalfont{Beta}}\left(x;\alpha_1+i,\alpha_2+M^+-i\right) \right] \, , \qquad \qquad
\label{eq:dncb.distr.cond.m}
\end{eqnarray}
\end{itemize}
where $\mbox{\normalfont{Binomial}}\left(i;M^+,\frac{\lambda_1}{\lambda^+}\right)$ is the probability mass function of the $\mbox{\normalfont{Binomial}}\left(M^+,\frac{\lambda_1}{\lambda^+}\right)$ distribution evaluated in $i=0,\ldots,M^+$.
\end{proposition}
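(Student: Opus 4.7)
The plan is to condition on the finer pair $(M_1,M_2)$, apply the characterizing Property~\ref{prope:char.prop.chisq} in that conditional world, and then integrate out to recover the coarser conditioning on $M^+$. The starting point is the mixture representation in Equation~(\ref{eq:mixrepres.ncchisq}): since $(Y'_1,M_1)$ is independent of $(Y'_2,M_2)$, conditionally on $(M_1,M_2)$ the random variables $Y'_1$ and $Y'_2$ are independent central Chi-Squareds with $2(\alpha_i+M_i)$ degrees of freedom respectively. Property~\ref{prope:char.prop.chisq} then applies in this conditional setting and yields the factorization of $X'=Y'_1/Y'^+$ and $Y'^+$ given $(M_1,M_2)$, together with
$$X' \mid (M_1,M_2) \sim \mbox{Beta}(\alpha_1+M_1,\alpha_2+M_2) \quad \mbox{and} \quad Y'^+ \mid (M_1,M_2) \sim \chi^2_{2\alpha^+ + 2M^+}.$$

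The key observation for passing from $(M_1,M_2)$ to $M^+$ is that the conditional law of $Y'^+$ above depends on the pair only through its sum $M^+$; consequently $Y'^+$ is conditionally independent of $(M_1,M_2)$ given $M^+$, and $Y'^+ \mid M^+ \sim \chi^2_{2\alpha^+ + 2M^+}$. Combining this with the Poisson fact recalled in Section~\ref{sec:prelim} that $M_1 \mid M^+ \sim \mbox{Binomial}(M^+,\lambda_1/\lambda^+)$, I would write the joint conditional density of $(X',Y'^+)$ given $M^+$ by the law of total probability, summing over the values of $M_1$:
$$f_{X',Y'^+\mid M^+}(x,y) \; = \; f_{Y'^+\mid M^+}(y) \sum_{i=0}^{M^+} \mbox{Binomial}\!\left(i;M^+,\frac{\lambda_1}{\lambda^+}\right) \mbox{Beta}(x;\alpha_1+i,\alpha_2+M^+-i).$$
The right-hand side is a product of a function of $y$ alone and a function of $x$ alone, which simultaneously proves claim (i) and, upon identifying the $x$-factor as $f_{X'\mid M^+}(x)$, claim (ii).

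The main subtlety I expect is the step bridging conditional independence given $(M_1,M_2)$ and conditional independence given $M^+$: this is not a free tower-property consequence and genuinely requires the remark that the conditional distribution of $Y'^+$ collapses to a function of $M^+$ alone. The remainder is bookkeeping: the mixture representation in Equation~(\ref{eq:mixrepres.ncchisq}) supplies the conditional centrality of the $Y'_i$, Property~\ref{prope:char.prop.chisq} does the heavy lifting for both the independence and the Beta conditional, and the Poisson--Binomial relationship produces the finite Binomial--Beta mixture displayed in Equation~(\ref{eq:dncb.distr.cond.m}).
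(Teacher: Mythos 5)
Your proof is correct. The paper itself offers no argument here --- it simply defers to Proposition 7 of \cite{OngOrs15} --- but your route is precisely the one the surrounding text describes: condition on the finer pair $(M_1,M_2)$ so that Property~\ref{prope:char.prop.chisq} applies to the now-central Chi-Squareds, then pass to $M^+$ using the Binomial conditional of $M_1$ given $M^+$. You also correctly flag and resolve the only delicate point, namely that conditional independence given $(M_1,M_2)$ does not automatically descend to conditional independence given $M^+$; it does here because the conditional law of $Y'^+$ given $(M_1,M_2)$ is $\chi^2_{2\alpha^++2M^+}$, a function of the sum alone, so it factors out of the Binomial mixture and leaves the joint conditional density in product form.
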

\begin{proof}
For a proof see \cite{OngOrs15}, Proposition 7.
\end{proof}
The easiness of the conditional density in Equation~(\ref{eq:dncb.distr.cond.m}), being a simple mixture of Beta densities, enables to overcome the aforementioned limitations steming from the mathematical complexity of the DNcB density. Indeed, in light of Proposition~\ref{propo:dncb.cond.ind}, the proof of a new formula for the raw moments of the Doubly Non-central Beta model becomes considerably simplified. In this regard, by analogy with the form of the density in Equation~(\ref{eq:dens.beta.dnc}), the $r$-th moment about zero of the $\mbox{\normalfont{DNcB}}$ distribution can be stated as
\begin{eqnarray}
\lefteqn{\mathbb{E}\left[\mbox{\normalfont{DNcB}}^{\, r}\left(\alpha_1,\alpha_2,\lambda_1,\lambda_2\right)\right]=}\nonumber \\
& = & \sum_{j=0}^{+\infty} \sum_{k=0}^{+\infty}  \left\{\mbox{\normalfont{Poisson}}\left(j \, ; \, \frac{\lambda_1}{2}\right) \mbox{\normalfont{Poisson}}\left(k \, ; \, \frac{\lambda_2}{2}\right) \mathbb{E}\left[\mbox{\normalfont{Beta}}^{\, r}\left(\alpha_1+j,\alpha_2+k\right)\right]\right\} \, . \qquad 
\label{eq:momr.beta.dnc.doublesers}
\end{eqnarray}
By Equation~(\ref{eq:poch.symb.sum}), Equation~(\ref{eq:momr.beta.dnc.doublesers}) can be equivalently expressed as the following infinite sum of Kummer's confluent hypergeometric functions: 
\begin{eqnarray}
\lefteqn{\mathbb{E}\left[\mbox{\normalfont{DNcB}}^{\, r}\left(\alpha_1,\alpha_2,\lambda_1,\lambda_2\right)\right]=\mathbb{E}\left[\mbox{\normalfont{Beta}}^{\, r}\left(\alpha_1,\alpha_2\right)\right] \cdot}\nonumber \\
& \cdot & e^{-\frac{\lambda^+}{2}} \sum_{j=0}^{+\infty}   \frac{\left(\alpha^+\right)_j}{\left(\alpha_1\right)_j} \frac{\left(\alpha_1+r\right)_j}{\left(\alpha^++r\right)_j}  \frac{\left(\frac{\lambda_1}{2}\right)^j}{j!} \, _1F_1\left(\alpha^++j;\alpha^++r+j;\frac{\lambda_2}{2}\right)  \, ; \qquad \qquad \,
\label{eq:momr.beta.dnc.oneser}
\end{eqnarray}
however, this formula is computationally cumbersome. That said, a new general formula for the raw moments of the $\mbox{DNcB}$ distribution is provided by the following Proposition, which broadens and completes Proposition 7 in \cite{OngOrs15}. This formula allows the computation of the quantity at study to be reduced from the single infinite series in Equation~(\ref{eq:momr.beta.dnc.oneser}) to a surprisingly simple form given by the following finite sum.

\begin{proposition}[Moments about zero of the $\mbox{\normalfont{DNcB}}$ distribution]
\label{propo:momr.beta.dnc.rapp}
\begin{eqnarray}
\lefteqn{\mathbb{E}\left[\mbox{\normalfont{DNcB}}^{\, r}\left(\alpha_1,\alpha_2,\lambda_1,\lambda_2\right)\right]= \qquad \qquad \qquad \qquad \qquad \qquad r \in \mathbb{N}}\nonumber \\
& = & \mathbb{E}\left[\mbox{\normalfont{Beta}}^{\, r}\left(\alpha_1,\alpha_2\right)\right] \cdot e^{-\frac{\lambda^+}{2}}\sum_{i=0}^{r} \frac{{r \choose i}\left(\alpha^+\right)_i\left(\frac{\lambda_1}{2}\right)^i}{\left(\alpha_1\right)_i \left(\alpha^++r\right)_i} \, _1F_1\left(\alpha^++i;\alpha^++r+i;\frac{\lambda^+}{2}\right) \, . \nonumber \\
\label{eq:momr.beta.dnc}
\end{eqnarray}
\end{proposition}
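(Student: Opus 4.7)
The approach capitalizes on Proposition~\ref{propo:dncb.cond.ind}: by conditional independence, the $r$-th moment decomposes as $\mathbb{E}[(X')^r] = \mathbb{E}_{M^+}\{\mathbb{E}[(X')^r \mid M^+]\}$. First I would evaluate the inner conditional expectation by integrating $x^r$ against the mixture density in Equation~(\ref{eq:dncb.distr.cond.m}) and using the Beta moment formula in Equation~(\ref{eq:momr.beta}), obtaining
\begin{equation*}
\mathbb{E}\left[(X')^r \mid M^+ = m\right] = \frac{1}{(\alpha^+ + m)_r} \sum_{i=0}^m \binom{m}{i} p^i (1-p)^{m-i} (\alpha_1 + i)_r,
\end{equation*}
with $p = \lambda_1/\lambda^+$. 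The sum above is precisely $\mathbb{E}[(\alpha_1 + M_1)_r \mid M^+ = m]$, where $M_1 \mid M^+ = m \sim \mbox{\normalfont{Binomial}}(m,p)$.

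The pivotal step is to convert the ascending factorial $(\alpha_1 + M_1)_r$ into a form whose Binomial expectation is tractable, namely descending factorials of $M_1$. To this end I would establish the polynomial identity
\begin{equation*}
(\alpha_1 + x)_r = (\alpha_1)_r \sum_{i=0}^r \binom{r}{i} \frac{x(x-1)\cdots(x-i+1)}{(\alpha_1)_i},
\end{equation*}
which follows from Newton's forward-difference formula: a direct calculation gives $\Delta^i (\alpha_1 + x)_r\big|_{x=0} = r(r-1)\cdots(r-i+1)\,(\alpha_1 + i)_{r-i}$, and Equation~(\ref{eq:poch.symb.ratio}) converts $(\alpha_1 + i)_{r-i}$ into $(\alpha_1)_r/(\alpha_1)_i$. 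Since $\mathbb{E}[M_1(M_1-1)\cdots(M_1-i+1) \mid M^+ = m] = m(m-1)\cdots(m-i+1)\, p^i$, the conditional expectation now truncates to a \emph{finite} sum over $i = 0,\ldots,\min(r,m)$, which is the heart of why the answer will be a finite sum rather than a series.

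Finally I would take the outer expectation over $M^+ \sim \mbox{\normalfont{Poisson}}(\lambda^+/2)$. Applying Equation~(\ref{eq:poch.symb.sum}) to write $(\alpha^+ + m)_r = (\alpha^+)_r (\alpha^+ + r)_m / (\alpha^+)_m$ isolates the factor $(\alpha_1)_r/(\alpha^+)_r = \mathbb{E}[\mbox{\normalfont{Beta}}^r(\alpha_1,\alpha_2)]$. Swapping the two sums so that the finite $i$-sum becomes the outer one, setting $n = m - i$, and invoking Equation~(\ref{eq:poch.symb.sum}) twice more to split $(\alpha^+)_{n+i} = (\alpha^+)_i (\alpha^+ + i)_n$ and $(\alpha^+ + r)_{n+i} = (\alpha^+ + r)_i (\alpha^+ + r + i)_n$, the inner series over $n$ is exactly the Kummer function $_1F_1(\alpha^+ + i; \alpha^+ + r + i; \lambda^+/2)$. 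The simplification $p^i (\lambda^+/2)^i = (\lambda_1/2)^i$ then yields Equation~(\ref{eq:momr.beta.dnc}).

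The main obstacle I foresee is the ascending-to-descending factorial identity in the second paragraph: once it is in place, the remaining work is a careful but routine bookkeeping exercise in Pochhammer splitting and a single index shift. This identity is precisely the mechanism that collapses the cumbersome single infinite series of Equation~(\ref{eq:momr.beta.dnc.oneser}) into the compact finite sum of Equation~(\ref{eq:momr.beta.dnc}).
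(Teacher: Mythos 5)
Your proposal is correct and shares the paper's overall skeleton (condition on $M^+$ via Proposition~\ref{propo:dncb.cond.ind}, reduce to $\mathbb{E}[(\alpha_1+L)_r\mid M^+]$ for a Binomial $L$, take the Poisson expectation, swap sums and recognize $_1F_1$), but the combinatorial core is genuinely different. The paper expands $(\alpha_1+L)_r=[(\alpha_1-1)+(L+1)]_r$ by the Vandermonde-type identity~(\ref{eq:poch.symb.binom}), which produces terms $(L+1)_i=\binom{L+i}{L}i!$ whose Binomial expectation is not a standard factorial moment; this forces an appeal to Ljunggren's identity~(\ref{eq:ljunggren.id}) and, after the Poisson expectation, a closing Vandermonde re-summation (Equation~(\ref{eq:momr.beta.dnc.dim8})) to collapse the double sum. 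You instead expand $(\alpha_1+x)_r$ in the falling-factorial basis via Newton's forward-difference formula, i.e.
\begin{equation*}
\left(\alpha_1+x\right)_r=\left(\alpha_1\right)_r\sum_{i=0}^{r}{r \choose i}\frac{x\left(x-1\right)\cdots\left(x-i+1\right)}{\left(\alpha_1\right)_i}\,,
\end{equation*}
which is exactly the conjugate basis for Binomial moments: $\mathbb{E}[M_1(M_1-1)\cdots(M_1-i+1)\mid M^+=m]=m(m-1)\cdots(m-i+1)\,p^{\,i}$ is immediate, so both Ljunggren's identity and the final re-summation disappear. Your computation of $\Delta^i(\alpha_1+x)_r\big|_{x=0}=r(r-1)\cdots(r-i+1)\,(\alpha_1+i)_{r-i}$ is correct, the conversion via~(\ref{eq:poch.symb.ratio}) is correct, and the remaining Pochhammer splittings and the index shift $n=m-i$ reproduce Equation~(\ref{eq:momr.beta.dnc}) exactly (including the cancellation $p^{\,i}(\lambda^+/2)^i=(\lambda_1/2)^i$). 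What your route buys is a shorter, more transparent derivation resting on a single classical interpolation identity; what the paper's route buys is that the intermediate identity~(\ref{eq:poch.symb.binom}) is already part of its toolkit and is reused elsewhere, at the cost of importing Ljunggren's identity. One cosmetic remark: the $i$-sum need not be truncated to $\min(r,m)$ by hand, since the falling factorial of $m$ vanishes automatically for $i>m$; keeping the range $i=0,\ldots,r$ throughout makes the interchange of sums cleaner.
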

\begin{proof}
Let $X' \sim \mbox{\normalfont{DNcB}}\left(\alpha_1,\alpha_2,\lambda_1,\lambda_2\right)$ and $L$ have a $\mbox{Binomial}\left(M^+,\theta_1\right)$ distribution conditionally on $M^+ \sim \mbox{Poisson}\left(\lambda^+ \, / \, 2\right)$ with $\theta_1=\lambda_1 \, / \, \lambda^+$. By virtue of Equation~(\ref{eq:dncb.distr.cond.m}), one has:
\begin{eqnarray}
\lefteqn{\mathbb{E}\left[\left. \left(X'\right)^r \right| \, M^+\right]=}\nonumber \\
& = & \int_0^1 x^{\, r} \, f_{\left.X' \, \right|\, M^+}\left(x\right) \, dx=\sum_{i=0}^{M^+} \frac{\left(\alpha_1+i\right)_r}{\left(\alpha^++M^+\right)_r} \, {M^+ \choose i} \theta_1^{\, i} \left(1-\theta_1\right)^{M^+- \, i}= \nonumber \\
& = & \frac{\mathbb{E}\left[\left. \left(\alpha_1+L\right)_r \right| \, M^+\right]}{\left(\alpha^++M^+\right)_r} \, ,
\label{eq:momr.beta.dnc.dim1}
\end{eqnarray}
where, in light of Equation~(\ref{eq:poch.symb.binom}):
$$\left(\alpha_1+L\right)_r=\left[\left(\alpha_1-1\right)+\left(L+1\right)\right]_r=\sum_{i=0}^{r} {r \choose i} \left(\alpha_1-1\right)_{r-i} \left(L+1\right)_i \, ,$$
so that:
\begin{eqnarray}
\lefteqn{\mathbb{E}\left[\left. \left(\alpha_1+L\right)_r \right| \, M^+\right]=}\nonumber \\
& = & \sum_{i=0}^r {r \choose i} \left(\alpha_1-1\right)_{r-i} \, \mathbb{E}\left[\left. \left(L+1\right)_i \, \right| \, M^+\right]=\nonumber \\
& = & \sum_{i=0}^{r} {r \choose i} \left(\alpha_1-1\right)_{r-i} \sum_{l=0}^{M^+} \left(l+1\right)_i {M^+ \choose l} \left(1-\theta_1\right)^{M^+- \, l} \theta_1^{\, l} \, .
\label{eq:momr.beta.dnc.dim2}
\end{eqnarray}
By Equation~(\ref{eq:poch.symb}), for every $i=0,\ldots,r \,$:
\begin{equation}
\left(l+1\right)_i=\frac{\Gamma\left(l+i+1\right)}{\Gamma\left(l+1\right)}=\frac{\left(l+i\right) !}{l !}={l+i \choose l} \, i! \; ;
\label{eq:momr.beta.dnc.dim3}
\end{equation}
under Equation~(\ref{eq:momr.beta.dnc.dim3}), Equation~(\ref{eq:momr.beta.dnc.dim2}) can be thus rewritten as follows:
\begin{equation}
\mathbb{E}\left[\left. \left(\alpha_1+L\right)_r \right| \, M^+\right]=\sum_{i=0}^{r} {r \choose i} i ! \left(\alpha_1-1\right)_{r-i} \sum_{l=0}^{M^+} {l+i \choose l} {M^+ \choose l} \left(1-\theta_1\right)^{M^+- \, l} \theta_1^{\, l} \, .
\label{eq:momr.beta.dnc.dim4}
\end{equation}
In carrying out the prove, reference must be made to Ljunggren's Identity, namely
\begin{equation}
\sum_{k=0}^{n} {\alpha+k \choose k} {n \choose k} \left(x-y\right)^{n-k} y^k=\sum_{k=0}^{n} {\alpha \choose k} {n \choose k} x^{n-k} y^k \, , 
\label{eq:ljunggren.id}
\end{equation}
which is (3.18) in \cite{Gou72}. By setting $k=l$, $n=M^+$, $\alpha=i$, $x=1$, $y=\theta_1$ in Equation~(\ref{eq:ljunggren.id}), Equation~(\ref{eq:momr.beta.dnc.dim4}) can be restated in the following form:
\begin{equation}
\mathbb{E}\left[\left. \left(\alpha_1+L\right)_r \right| \, M^+\right]=\sum_{i=0}^{r} {r \choose i} i ! \left(\alpha_1-1\right)_{r-i} \sum_{l=0}^{M^+} {i \choose l} {M^+ \choose l} \theta_1^{\, l} \, ,
\label{eq:momr.beta.dnc.dim5}
\end{equation}
so that, under Equation~(\ref{eq:momr.beta.dnc.dim5}), Equation~(\ref{eq:momr.beta.dnc.dim1}) can be equivalently expressed as
$$\mathbb{E}\left[\left. \left(X'\right)^r \right| \, M^+\right]=\frac{1}{\left(\alpha^++M^+\right)_r} \, \sum_{i=0}^{r} {r \choose i} i ! \left(\alpha_1-1\right)_{r-i} \sum_{l=0}^{M^+} {i \choose l} {M^+ \choose l} \theta_1^{\, l} \, .$$
By the law of iterated expectations one has:
$$\mathbb{E}\left[\left(X'\right)^r\right]=e^{-\frac{\lambda^+}{2}} \, \sum_{i=0}^{r} {r \choose i} i ! \left(\alpha_1-1\right)_{r-i} \sum_{m=0}^{+\infty} \frac{\left(\frac{\lambda^+}{2}\right)^m}{m ! \left(\alpha^++m\right)_r} \, \sum_{l=0}^{m} {i \choose l} {m \choose l} \theta_1^{\, l} \, ,$$
where, by Equations~(\ref{eq:poch.symb}) and~(\ref{eq:poch.symb.sum}), for every $m \in \mathbb{N} \cup \{0\}$:
$$\frac{\left(\frac{\lambda^+}{2}\right)^m}{m ! \left(\alpha^++m\right)_r}=\frac{\left(\frac{\lambda^+}{2}\right)^m \left(\alpha^+\right)_m}{m ! \left(\alpha^+\right)_{r+m}}=\frac{\left(\frac{\lambda^+}{2}\right)^m \Gamma\left(\alpha^++m\right)}{m ! \; \Gamma\left(\alpha^++r+m\right)} \, ,$$
so that:
$$\mathbb{E}\left[\left(X'\right)^r\right]=e^{-\frac{\lambda^+}{2}} \, \sum_{i=0}^{r} {r \choose i} i ! \left(\alpha_1-1\right)_{r-i} \sum_{m=0}^{+\infty} \frac{\left(\frac{\lambda^+}{2}\right)^m \Gamma\left(\alpha^++m\right)}{m ! \; \Gamma\left(\alpha^++r+m\right)} \, \sum_{l=0}^{m} {i \choose l} {m \choose l} \theta_1^{\, l} \, .$$
By bearing in mind that ${i \choose l}=0$ for $l>i$, we have:
$$\mathbb{E}\left[\left(X'\right)^r\right]=e^{-\frac{\lambda^+}{2}} \, \sum_{i=0}^{r} {r \choose i} i ! \left(\alpha_1-1\right)_{r-i} \sum_{l=0}^{i} \frac{\theta_1^{\, l} }{l !} {i \choose l} \sum_{m=l}^{+\infty} \frac{\left(\frac{\lambda^+}{2}\right)^m \Gamma\left(\alpha^++m\right)}{\left(m-l\right) ! \; \Gamma\left(\alpha^++r+m\right)} \, ;$$
by setting $k=m-l \Leftrightarrow m=l+k$, by virtue of Equation~(\ref{eq:poch.symb}) and in light of Equation~(\ref{eq:f11}):
\begin{eqnarray}
\lefteqn{\mathbb{E}\left[\left(X'\right)^r\right]=} \nonumber \\
& = & e^{-\frac{\lambda^+}{2}} \, \sum_{i=0}^{r} {r \choose i} i ! \left(\alpha_1-1\right)_{r-i} \sum_{l=0}^{i} \frac{\left(\frac{\lambda_1}{2}\right)^l \Gamma\left(\alpha^++l\right) }{l ! \; \Gamma\left(\alpha^++r+l\right)} {i \choose l} \sum_{k=0}^{+\infty} \frac{\left(\frac{\lambda^+}{2}\right)^k \left(\alpha^++l\right)_k}{k ! \, \left(\alpha^++r+l\right)_k}= \nonumber \\
& = & e^{-\frac{\lambda^+}{2}} \cdot \nonumber \\
& \cdot & \sum_{i=0}^{r} {r \choose i} i ! \left(\alpha_1-1\right)_{r-i} \sum_{l=0}^{i} \frac{\left(\frac{\lambda_1}{2}\right)^l \Gamma\left(\alpha^++l\right) }{l ! \; \Gamma\left(\alpha^++r+l\right)} {i \choose l} \, _1F_1\left(\alpha^++l;\alpha^++r+l;\frac{\lambda^+}{2}\right)=\nonumber \\
& = & \frac{e^{-\frac{\lambda^+}{2}}}{\left(\alpha^+\right)_r} \cdot \nonumber \\
& \cdot & \sum_{l=0}^{r} \frac{\left(\frac{\lambda_1}{2}\right)^l \left(\alpha^+\right)_l }{\left(l !\right)^2 \left(\alpha^++r\right)_l}  \, _1F_1\left(\alpha^++l;\alpha^++r+l;\frac{\lambda^+}{2}\right) \, \sum_{i=l}^{r} {r \choose i} \left(\alpha_1-1\right)_{r-i} \frac{\left(i !\right)^2}{\left(i-l\right)!} \, . \quad \nonumber \\
\label{eq:momr.beta.dnc.dim7}
\end{eqnarray}
For every $l=0,\ldots,r$, by setting $j=i-l \Leftrightarrow i=j+l$ and by virtue of Equation~(\ref{eq:poch.symb.binom}), the final sum in Equation~(\ref{eq:momr.beta.dnc.dim7}) is tantamount to:
\begin{eqnarray}
\lefteqn{\sum_{i=l}^{r} {r \choose i} \left(\alpha_1-1\right)_{r-i} \frac{\left(i!\right)^2}{\left(i-l\right)!}=} \nonumber \\
& = & \sum_{j=0}^{r-l} {r \choose j+l} \left(\alpha_1-1\right)_{r-j-l} \frac{\left[\left(j+l\right)!\right]^2}{j!}=\frac{r!}{\left(r-l\right)!}\sum_{j=0}^{r-l} {r-l \choose j} \left(\alpha_1-1\right)_{r-j-l} \, \left(j+l\right)! \nonumber \\
& = & \frac{r! \, l!}{\left(r-l\right)!}\sum_{j=0}^{r-l} {r-l \choose j} \left(\alpha_1-1\right)_{r-j-l} \, \left(l+1\right)_j=\frac{r! \, l!}{\left(r-l\right)!} \left[\left(\alpha_1-1\right)+\left(l+1\right)\right]_{r-l}=\nonumber \\
& = & \frac{r! \, l!}{\left(r-l\right)!} \left(\alpha_1+l\right)_{r-l} \, . \quad \quad
\label{eq:momr.beta.dnc.dim8}
\end{eqnarray}
Finally, under Equation~(\ref{eq:momr.beta.dnc.dim8}) and in light of Equations~(\ref{eq:poch.symb.ratio}) and~(\ref{eq:momr.beta}), Equation~(\ref{eq:momr.beta.dnc.dim7}) can be rewritten in the same form as in Equation~(\ref{eq:momr.beta.dnc}).
\end{proof}

The first moment of the $\mbox{DNcB}$ distribution can be thus obtained by taking $r=1$ in Equations~(\ref{eq:momr.beta}) and~(\ref{eq:momr.beta.dnc}) as follows:
\begin{eqnarray}
\lefteqn{\mathbb{E}\left[\mbox{\normalfont{DNcB}}\left(\alpha_1,\alpha_2,\lambda_1,\lambda_2\right)\right]=} \nonumber \\
& = & \frac{\alpha_1}{\alpha^+}\, e^{-\frac{\lambda^+}{2}} \left[_1F_1\left(\alpha^+;\alpha^++1;\frac{\lambda^+}{2}\right)+\frac{\alpha^+ \, \frac{\lambda_1}{2}}{\alpha_1 \left(\alpha^++1\right)} \, _1F_1\left(\alpha^++1;\alpha^++2;\frac{\lambda^+}{2}\right)\right] \, . \nonumber \\
\label{eq:mom1.beta.dnc}
\end{eqnarray}
The expression in Equation~(\ref{eq:mom1.beta.dnc}) can be algebraically manipulated with the aim to reduce the number of distinct $_1F_1$ functions from two to one. Specifically, this latter formula of the $\mbox{DNcB}\left(\alpha_1,\alpha_2,\lambda_1,\lambda_2\right)$ mean can be rearranged so as to yield the following convex linear combination between the $\mbox{Beta}\left(\alpha_1,\alpha_2\right)$ mean and the compositional ratio of the non-centrality parameters with respect to $\lambda_1$. These two additive components are given weights depending only on one $_1F_1$ function. 
\begin{proposition}[Alternative expression of the $\mbox{\normalfont{DNcB}}$ mean]
\label{propo:mom1.beta.dnc.improv}
\begin{eqnarray}
\lefteqn{\mathbb{E}\left[\mbox{\normalfont{DNcB}}\left(\alpha_1,\alpha_2,\lambda_1,\lambda_2\right)\right]=} \nonumber \\
& = & \frac{\alpha_1}{\alpha^+} \left[\, e^{-\frac{\lambda^+}{2}} \, _1F_1\left(\alpha^+;\alpha^++1;\frac{\lambda^+}{2}\right)\right]+\frac{\lambda_1}{\lambda^+} \left[1-e^{-\frac{\lambda^+}{2}} \, _1F_1\left(\alpha^+;\alpha^++1;\frac{\lambda^+}{2}\right)\right] \, . \nonumber \\
\label{eq:mom1.beta.dnc.improv}
\end{eqnarray}
\end{proposition}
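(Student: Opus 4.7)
The plan is to start from the expression in Equation~(\ref{eq:mom1.beta.dnc}) and collapse the two Kummer functions into one by establishing the contiguous identity
\begin{equation*}
\tfrac{\lambda^+}{2}\,{}_1F_1\!\left(\alpha^++1;\alpha^++2;\tfrac{\lambda^+}{2}\right) = (\alpha^++1)\left[e^{\frac{\lambda^+}{2}} - {}_1F_1\!\left(\alpha^+;\alpha^++1;\tfrac{\lambda^+}{2}\right)\right].
\end{equation*}
Once this is in hand, the target expression (\ref{eq:mom1.beta.dnc.improv}) follows after a short algebraic tidy-up, since the coefficient multiplying ${}_1F_1(\alpha^++1;\alpha^++2;\lambda^+/2)$ in Equation~(\ref{eq:mom1.beta.dnc}), once merged with the global prefactor $\alpha_1/\alpha^+$, equals $\frac{\lambda_1/2}{\alpha^++1}$, so the identity produces exactly $\theta_1\bigl[1 - e^{-\lambda^+/2}\,{}_1F_1(\alpha^+;\alpha^++1;\lambda^+/2)\bigr]$ with $\theta_1=\lambda_1/\lambda^+$, while the first term of Equation~(\ref{eq:mom1.beta.dnc}) directly supplies the $\frac{\alpha_1}{\alpha^+}$-weighted summand.

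The main work is therefore concentrated in proving the contiguous identity. I would derive it by combining two of the recurrences already recorded in the paper. Apply relation~(\ref{eq:f11.rec.rel3}) with $a=\alpha^+$, $b=\alpha^++1$, $x=\lambda^+/2$ and use ${}_1F_1(\alpha^++1;\alpha^++1;x)=e^x$ to obtain
\begin{equation*}
(\alpha^++1)(\alpha^++\tfrac{\lambda^+}{2})\,{}_1F_1\!\left(\alpha^+;\alpha^++1;\tfrac{\lambda^+}{2}\right) - \tfrac{\lambda^+}{2}\,{}_1F_1\!\left(\alpha^+;\alpha^++2;\tfrac{\lambda^+}{2}\right) - \alpha^+(\alpha^++1)\,e^{\frac{\lambda^+}{2}} = 0,
\end{equation*}
which expresses ${}_1F_1(\alpha^+;\alpha^++2;\lambda^+/2)$ in terms of the target quantities. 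Then apply relation~(\ref{eq:f11.rec.rel1}) with $a=\alpha^++1$, $b=\alpha^++2$, $x=\lambda^+/2$ and use ${}_1F_1(\alpha^++2;\alpha^++2;x)=e^x$ to get
\begin{equation*}
{}_1F_1\!\left(\alpha^+;\alpha^++2;\tfrac{\lambda^+}{2}\right) = (\alpha^++1)\,e^{\frac{\lambda^+}{2}} - (\alpha^++\tfrac{\lambda^+}{2})\,{}_1F_1\!\left(\alpha^++1;\alpha^++2;\tfrac{\lambda^+}{2}\right),
\end{equation*}
which eliminates the ${}_1F_1(\alpha^+;\alpha^++2;\cdot)$ term. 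Substituting this into the previous equation, a common factor $(\alpha^++\lambda^+/2)>0$ pops out and cancels, leaving precisely the sought contiguous identity.

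The hardest step is, as one might guess, the identification of the correct combination of contiguous relations among (\ref{eq:f11.rec.rel1})--(\ref{eq:f11.rec.rel4}): the obvious attempts using a single recurrence either retain an auxiliary ${}_1F_1(\alpha^+;\alpha^++2;\cdot)$ term or connect the two target functions via a difference in parameters of the wrong shape. The choice of (\ref{eq:f11.rec.rel3}) and (\ref{eq:f11.rec.rel1}), together with the fortunate reductions ${}_1F_1(c;c;x)=e^x$ that arise at both applications, is what makes the two ancillary Kummer functions disappear simultaneously. After that, the remaining manipulations are purely arithmetic: factor out $e^{-\lambda^+/2}$, group terms by $\alpha_1/\alpha^+$ versus $\lambda_1/\lambda^+$, and recognize the resulting expression as the convex combination claimed in Equation~(\ref{eq:mom1.beta.dnc.improv}).
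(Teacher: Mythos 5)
Your proposal is correct and follows essentially the same route as the paper: the paper likewise applies relation~(\ref{eq:f11.rec.rel1}) (equivalently~(\ref{eq:f11.rec.rel4})) at $a=\alpha^++1$, $b=\alpha^++2$ and relation~(\ref{eq:f11.rec.rel2}) (equivalently~(\ref{eq:f11.rec.rel3})) at $a=\alpha^+$, $b=\alpha^++1$ to eliminate $_1F_1\left(\alpha^+;\alpha^++2;\lambda^+/2\right)$ and arrive at exactly your contiguous identity, which is Equation~(\ref{eq:mom1.beta.dnc.improv.dim2}). The final substitution into Equation~(\ref{eq:mom1.beta.dnc}) and the bookkeeping of the coefficient $\frac{\lambda_1/2}{\alpha^++1}$ also match the paper's computation.
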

\begin{proof}
By taking $a=\alpha^++1$, $b=\alpha^++2$, $x=\lambda^+/ \, 2$ in Equation~(\ref{eq:f11.rec.rel1}), or, equivalently, in Equation~(\ref{eq:f11.rec.rel4}), one obtains:
\begin{equation}
_1F_1\left(\alpha^+;\alpha^++2;\frac{\lambda^+}{2}\right)=\left(\alpha^++1\right) e^{\frac{\lambda^+}{2}}-\left(\alpha^++\frac{\lambda^+}{2}\right) \, _1F_1\left(\alpha^++1;\alpha^++2;\frac{\lambda^+}{2}\right) \, , \;
\label{eq:mom1.beta.dnc.improv.dim1}
\end{equation}
while the special case of Equation~(\ref{eq:f11.rec.rel2}), or, equivalently, of Equation~(\ref{eq:f11.rec.rel3}), corresponding to $a=\alpha^+$, $b=\alpha^++1$, $x=\lambda^+/ \, 2$ leads, under Equation~(\ref{eq:mom1.beta.dnc.improv.dim1}), by simple computations, to:
\begin{equation}
_1F_1\left(\alpha^++1;\alpha^++2;\frac{\lambda^+}{2}\right)=\frac{\alpha^++1}{\frac{\lambda^+}{2}} \left[ \, e^{\frac{\lambda^+}{2}}- \, _1F_1\left(\alpha^+;\alpha^++1;\frac{\lambda^+}{2}\right)\right] \, .
\label{eq:mom1.beta.dnc.improv.dim2}
\end{equation}
Finally, under Equation~(\ref{eq:mom1.beta.dnc.improv.dim2}), Equation~(\ref{eq:mom1.beta.dnc}) can be exhibited in the form of Equation~(\ref{eq:mom1.beta.dnc.improv}).
\end{proof}
Analogously, the formula of the second moment of the $\mbox{DNcB}$ distribution ensues from Equations~(\ref{eq:momr.beta}) and~(\ref{eq:momr.beta.dnc}) by setting $r=2$:
\begin{eqnarray}
\lefteqn{\mathbb{E}\left[\mbox{\normalfont{DNcB}}^{\, 2}\left(\alpha_1,\alpha_2,\lambda_1,\lambda_2\right)\right]=} \nonumber \\
& = & \frac{\left(\alpha_1\right)_2}{\left(\alpha^+\right)_2} \, e^{-\frac{\lambda^+}{2}} \left[_1F_1\left(\alpha^+;\alpha^++2;\frac{\lambda^+}{2}\right) + \frac{\alpha^+ \, \lambda_1}{\alpha_1 \left(\alpha^++2\right)} \,  _1F_1\left(\alpha^++1;\alpha^++3;\frac{\lambda^+}{2}\right)+ \right. \nonumber \\
& + & \left. \frac{\left(\alpha^+\right)_2 \, \left(\lambda_1/2\right)^2}{\left(\alpha_1\right)_2 \left(\alpha^++2\right)_2} \,  _1F_1\left(\alpha^++2;\alpha^++4;\frac{\lambda^+}{2}\right)\right] \, .
\label{eq:mom2.beta.dnc}
\end{eqnarray}
A restatement of Equation~(\ref{eq:mom2.beta.dnc}) depending only on two $_1F_1$ functions instead of three is derived herein.
\begin{proposition}[Alternative expression of the second moment of the $\mbox{\normalfont{DNcB}}$ distribution]
\label{propo:mom2.beta.dnc.improv}
\begin{eqnarray}
\lefteqn{\mathbb{E}\left[\mbox{\normalfont{DNcB}}^{\, 2}\left(\alpha_1,\alpha_2,\lambda_1,\lambda_2\right)\right]=\frac{\left(\alpha_1\right)_2}{\left(\alpha^+\right)_2} \, e^{-\frac{\lambda^+}{2}} \, _1F_1\left(\alpha^+;\alpha^++2;\frac{\lambda^+}{2}\right)+} \nonumber \\
& + & \frac{1}{\alpha^++2} \left[\frac{\lambda_1\left(\alpha_1+1\right)}{\alpha^++1}-\frac{\left(\lambda_1/2\right)^2}{\alpha^++1+\frac{\lambda^+}{2}} \right] \, e^{-\frac{\lambda^+}{2}} \, _1F_1\left(\alpha^++1;\alpha^++3;\frac{\lambda^+}{2}\right)+ \nonumber \\
& + & \frac{\left(\lambda_1/2\right)^2}{\frac{\lambda^+}{2}\left(\alpha^++1+\frac{\lambda^+}{2}\right)} \left[1-e^{-\frac{\lambda^+}{2}} \, _1F_1\left(\alpha^++1;\alpha^++3;\frac{\lambda^+}{2}\right)\right] \, . \qquad \qquad
\label{eq:mom2.beta.dnc.improv}
\end{eqnarray}
\end{proposition}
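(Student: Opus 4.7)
The plan is to reduce Equation~(\ref{eq:mom2.beta.dnc}) to Equation~(\ref{eq:mom2.beta.dnc.improv}) by eliminating the third hypergeometric function $_1F_1\left(\alpha^++2;\alpha^++4;\lambda^+/2\right)$ in favour of $e^{\lambda^+/2}$ and $_1F_1\left(\alpha^++1;\alpha^++3;\lambda^+/2\right)$, along the same lines as the first-moment argument of Proposition~\ref{propo:mom1.beta.dnc.improv}. Once this third $_1F_1$ is replaced, the $e^{\lambda^+/2}$ piece, multiplied by the global prefactor $e^{-\lambda^+/2}$, yields the constant $1$ inside the last bracket of~(\ref{eq:mom2.beta.dnc.improv}), while the $_1F_1\left(\alpha^++1;\alpha^++3;\lambda^+/2\right)$ piece splits: part of it absorbs into the original middle summand of~(\ref{eq:mom2.beta.dnc}) and the rest reproduces the negative $_1F_1$ term of the third bracket of~(\ref{eq:mom2.beta.dnc.improv}).

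I would perform the elimination in two contiguity steps, both with $a=\alpha^++2$, $b=\alpha^++3$, $x=\lambda^+/2$. First, applying~(\ref{eq:f11.rec.rel2}) (equivalently~(\ref{eq:f11.rec.rel3})) together with $_1F_1\left(\alpha^++2;\alpha^++2;\lambda^+/2\right)=e^{\lambda^+/2}$ expresses $_1F_1\left(\alpha^++2;\alpha^++4;\lambda^+/2\right)$ as a rational linear combination of $e^{\lambda^+/2}$ and $_1F_1\left(\alpha^++2;\alpha^++3;\lambda^+/2\right)$. Second, applying~(\ref{eq:f11.rec.rel1}) (equivalently~(\ref{eq:f11.rec.rel4})) together with $_1F_1\left(\alpha^++3;\alpha^++3;\lambda^+/2\right)=e^{\lambda^+/2}$ expresses the newly introduced $_1F_1\left(\alpha^++2;\alpha^++3;\lambda^+/2\right)$ in terms of $e^{\lambda^+/2}$ and $_1F_1\left(\alpha^++1;\alpha^++3;\lambda^+/2\right)$, producing the denominator $\alpha^++1+\lambda^+/2$ that is exactly the denominator featuring in the target identity. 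Composing the two relations gives the desired reduced form of $_1F_1\left(\alpha^++2;\alpha^++4;\lambda^+/2\right)$.

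What remains is substitution and algebraic bookkeeping. Inserting the reduced form into the third summand of~(\ref{eq:mom2.beta.dnc}) and using $\left(\alpha^++2\right)_2=(\alpha^++2)(\alpha^++3)$ causes the factor $\alpha^++3$ to cancel cleanly. The $e^{\lambda^+/2}$-contribution then delivers, after multiplication by $e^{-\lambda^+/2}$, the constant $(\lambda_1/2)^2/[(\lambda^+/2)(\alpha^++1+\lambda^+/2)]$ of the last bracket. The $_1F_1\left(\alpha^++1;\alpha^++3;\lambda^+/2\right)$-contribution carries a coefficient of the form $-(\lambda_1/2)^2(\alpha^++2+\lambda^+/2)/[(\alpha^++2)(\lambda^+/2)(\alpha^++1+\lambda^+/2)]$, which I would split using $\alpha^++2+\lambda^+/2=(\alpha^++2)+(\lambda^+/2)$ into two fractions: one joins the original middle summand of~(\ref{eq:mom2.beta.dnc}) to form the corrected coefficient $[\lambda_1(\alpha_1+1)/(\alpha^++1)-(\lambda_1/2)^2/(\alpha^++1+\lambda^+/2)]/(\alpha^++2)$, and the other reproduces the negative $_1F_1$ summand in the last bracket of~(\ref{eq:mom2.beta.dnc.improv}). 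The main obstacle I anticipate is exactly this bookkeeping: no single contiguity relation directly links the starting and target $_1F_1$s, so the two recurrences must be composed and tracked carefully, and the resulting coefficient of $_1F_1\left(\alpha^++1;\alpha^++3;\lambda^+/2\right)$ is spread across two distinct brackets of the target, which makes the final regrouping the only delicate step of the proof.
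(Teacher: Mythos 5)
Your proposal is correct and follows essentially the same route as the paper: both reduce ${}_1F_1\left(\alpha^++2;\alpha^++4;\lambda^+/2\right)$ to a combination of $e^{\lambda^+/2}$ and ${}_1F_1\left(\alpha^++1;\alpha^++3;\lambda^+/2\right)$ --- arriving at exactly the identity in Equation~(\ref{eq:mom2.beta.dnc.improv.dim5}) --- and then substitute into Equation~(\ref{eq:mom2.beta.dnc}), splitting the resulting coefficient via $\alpha^++2+\lambda^+/2=(\alpha^++2)+(\lambda^+/2)$ just as the paper implicitly does. The only immaterial difference is the chain of contiguity relations: you compose two relations both anchored at $(a,b)=(\alpha^++2,\alpha^++3)$, exploiting ${}_1F_1\left(a;a;x\right)=e^{x}$ twice, whereas the paper composes three relations and passes through the auxiliary function ${}_1F_1\left(\alpha^++1;\alpha^++4;\lambda^+/2\right)$ before eliminating it.
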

\begin{proof}
By taking $a=\alpha^++2$, $b=\alpha^++4$, $x=\lambda^+/ \, 2$ in Equation~(\ref{eq:f11.rec.rel4}), one obtains:
\begin{eqnarray}
\lefteqn{2 \, _1F_1\left(\alpha^++1;\alpha^++4;\frac{\lambda^+}{2}\right)=\left(\alpha^++3\right) \cdot} \nonumber \\
& \cdot &   _1F_1\left(\alpha^++2;\alpha^++3;\frac{\lambda^+}{2}\right) -\left(\alpha^++1+\frac{\lambda^+}{2}\right) \, _1F_1\left(\alpha^++2;\alpha^++4;\frac{\lambda^+}{2}\right) \, , \quad \quad
\label{eq:mom2.beta.dnc.improv.dim1}
\end{eqnarray}
while the special case of Equation~(\ref{eq:f11.rec.rel3}) where $a=\alpha^++1$, $b=\alpha^++3$, $x=\lambda^+/ \, 2$ leads to:
\begin{eqnarray}
\lefteqn{\left(\alpha^++3\right)\left(\alpha^++1+\frac{\lambda^+}{2}\right) \, _1F_1\left(\alpha^++1;\alpha^++3;\frac{\lambda^+}{2}\right)=\left(\alpha^++1\right) \cdot} \nonumber \\
& \cdot & \left(\alpha^++3\right) \, _1F_1\left(\alpha^++2;\alpha^++3;\frac{\lambda^+}{2}\right) +2 \cdot \frac{\lambda^+}{2} \,  _1F_1\left(\alpha^++1;\alpha^++4;\frac{\lambda^+}{2}\right) \, ; \quad
\label{eq:mom2.beta.dnc.improv.dim2}
\end{eqnarray}
hence, under Equation~(\ref{eq:mom2.beta.dnc.improv.dim1}), Equation~(\ref{eq:mom2.beta.dnc.improv.dim2}) reduces to:
\begin{eqnarray}
\lefteqn{\left(\alpha^++1+\frac{\lambda^+}{2}\right)\left\{\left(\alpha^++3\right) \left[\, _1F_1\left(\alpha^++1;\alpha^++3;\frac{\lambda^+}{2}\right) \right. \right. +}\nonumber \\
& - & \left. \left. \, _1F_1\left(\alpha^++2;\alpha^++3;\frac{\lambda^+}{2}\right)\right]+\frac{\lambda^+}{2} \, _1F_1\left(\alpha^++2;\alpha^++4;\frac{\lambda^+}{2}\right)\right\}=0 \, .
\label{eq:mom2.beta.dnc.improv.dim3}
\end{eqnarray}
By setting $a=\alpha^++2$, $b=\alpha^++3$, $x=\lambda^+/ \, 2$ in Equation~(\ref{eq:f11.rec.rel4}), or, equivalently, in Equation~(\ref{eq:f11.rec.rel1}), one has:
\begin{eqnarray}
\lefteqn{\left(\alpha^++1+\frac{\lambda^+}{2}\right) \,  _1F_1\left(\alpha^++2;\alpha^++3;\frac{\lambda^+}{2}\right) \qquad =} \nonumber \\
& = & \qquad \left(\alpha^++2\right) \, e^{\frac{\lambda^+}{2}} - \, _1F_1\left(\alpha^++1;\alpha^++3;\frac{\lambda^+}{2}\right) \, ,
\label{eq:mom2.beta.dnc.improv.dim4}
\end{eqnarray}
so that, under Equation~(\ref{eq:mom2.beta.dnc.improv.dim4}), Equation~(\ref{eq:mom2.beta.dnc.improv.dim3}) reduces to:
\begin{eqnarray}
\lefteqn{_1F_1\left(\alpha^++2;\alpha^++4;\frac{\lambda^+}{2}\right)=\frac{\alpha^++3}{\frac{\lambda^+}{2}\left(\alpha^++1+\frac{\lambda^+}{2}\right)} \cdot}\nonumber \\
& \cdot & \left[\left(\alpha^++2\right) \, e^{\frac{\lambda^+}{2}}-\left(\alpha^++2+\frac{\lambda^+}{2}\right) \,  _1F_1\left(\alpha^++1;\alpha^++3;\frac{\lambda^+}{2}\right)\right] \, .
\label{eq:mom2.beta.dnc.improv.dim5}  
\end{eqnarray}
Finally, Equation~(\ref{eq:mom2.beta.dnc.improv}) is obtained from Equation~(\ref{eq:mom2.beta.dnc}) with simple computations by making use of Equation~(\ref{eq:mom2.beta.dnc.improv.dim5}).
\end{proof}
As far as the computation of improved expressions for higher order moments of the $\mbox{DNcB}$ distribution is concerned, similar reasonings apply; however, other recurrence relations are to be used in order to reduce the number of distinct $_1F_1$ functions (in this regard see \cite{AbrSte64}) and more and more burdensome algebraic manipulations are needed.

In light of Equation~(\ref{eq:momr.beta.dnc}), a new general formula for the moments about zero of the $\mbox{\normalfont{NcB1}}$ distribution can be derived regardless of Equation~(\ref{eq:momr.beta.nc1.def}).
\begin{proposition}[Moments about zero of the $\mbox{\normalfont{NcB1}}$ distribution]
\label{propo:momr.beta.nc1}
\begin{eqnarray}
\lefteqn{\mathbb{E}\left[\mbox{\normalfont{NcB1}}^{\, r}\left(\alpha_1,\alpha_2,\lambda\right)\right]= \qquad \qquad \qquad \qquad \qquad \qquad r \in \mathbb{N}}	\nonumber \\
& = & \mathbb{E}\left[\mbox{\normalfont{Beta}}^{\, r}\left(\alpha_1,\alpha_2\right)\right] \, e^{-\frac{\lambda}{2}}\sum_{i=0}^{r} \frac{{r \choose i}\left(\alpha^+\right)_i\left(\frac{\lambda}{2}\right)^i}{\left(\alpha_1\right)_i \left(\alpha^++r\right)_i} \, _1F_1\left(\alpha^++i;\alpha^++r+i;\frac{\lambda}{2}\right) \, . \nonumber \\
\label{eq:momr.beta.nc1}
\end{eqnarray}
\end{proposition}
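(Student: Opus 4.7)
The plan is to derive the formula as an immediate corollary of the $\mbox{DNcB}$ moment formula in Equation~(\ref{eq:momr.beta.dnc}). Indeed, recall from the preliminaries that the $\mbox{NcB1}\left(\alpha_1,\alpha_2,\lambda\right)$ distribution is precisely the special case of the $\mbox{DNcB}\left(\alpha_1,\alpha_2,\lambda_1,\lambda_2\right)$ distribution obtained by setting $\lambda_2=0$ and renaming $\lambda_1$ as $\lambda$. Hence, in this particular specification we have $\lambda^+=\lambda_1+\lambda_2=\lambda$ and $\lambda_1=\lambda$ as well.

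First I would restate Equation~(\ref{eq:momr.beta.dnc}) and perform the substitution $\lambda_2=0$, $\lambda_1=\lambda$ on its right-hand side. Every occurrence of $\lambda_1/2$ becomes $\lambda/2$, every occurrence of $\lambda^+/2$ likewise becomes $\lambda/2$, and the prefactor $e^{-\lambda^+/2}$ becomes $e^{-\lambda/2}$. The factor $\mathbb{E}\left[\mbox{Beta}^{\,r}\left(\alpha_1,\alpha_2\right)\right]$ and the finite sum structure are unaffected. Matching this to the left-hand side, which, by the definition of $\mbox{NcB1}$ as a distributional specialization of $\mbox{DNcB}$, equals $\mathbb{E}\left[\mbox{NcB1}^{\,r}\left(\alpha_1,\alpha_2,\lambda\right)\right]$, yields Equation~(\ref{eq:momr.beta.nc1}) verbatim.

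Since this reduction is purely a relabeling, there is no genuine obstacle and no new calculation is required beyond invoking Proposition~\ref{propo:momr.beta.dnc.rapp}. The only thing worth a brief remark in the write-up is that this new formula is structurally different from, and should be compared against, the pre-existing expression in Equation~(\ref{eq:momr.beta.nc1.def}) involving the $_2F_2$ function; this highlights that the conditional-independence approach yields an alternative finite-sum-of-$_1F_1$'s representation even in the singly non-central case, which is precisely the point emphasized earlier in the section for the $\mbox{DNcB}$ moments themselves.
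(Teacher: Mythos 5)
Your proposal is correct and coincides with the paper's own proof, which likewise obtains Equation~(\ref{eq:momr.beta.nc1}) from Equation~(\ref{eq:momr.beta.dnc}) by setting $\lambda_2=0$ and renaming $\lambda_1$ as $\lambda$, so that $\lambda^+=\lambda$. Nothing further is needed.
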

\begin{proof}
The proof of Equation~(\ref{eq:momr.beta.nc1}) follows from Equation~(\ref{eq:momr.beta.dnc}) by taking $\lambda_2=0$ and renaming $\lambda_1$ in $\lambda$.
\end{proof}

By comparing Equations~(\ref{eq:momr.beta.nc1.def}) and~(\ref{eq:momr.beta.nc1}), the following identity holds true for the hypergeometric functions involved in the two aforementioned moment formulas.

\begin{proposition}[Identity]
\label{propo:id.hyperg.funct}
Let $b>a>0$, $n \in \mathbb{N}$ and $x>0$. Then:
\begin{equation}
_2F_2\left(a+n,b;a,b+n;x\right)=\sum_{i=0}^{n} \frac{{n \choose i}\left(b\right)_i \, x^i}{\left(a\right)_i \left(b+n\right)_i} \, _1F_1\left(b+i;b+n+i;x\right).
\label{eq:id.hyperg.funct}
\end{equation}
\end{proposition}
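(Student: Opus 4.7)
The plan is to extract the identity as an immediate consequence of comparing the two expressions that we now have for $\mathbb{E}[\mbox{NcB1}^{\, r}(\alpha_1,\alpha_2,\lambda)]$. Specifically, Equation~(\ref{eq:momr.beta.nc1.def}) records the known literature formula for the raw moments of the Type~I Non-central Beta in terms of a single $_2F_2$, whereas Equation~(\ref{eq:momr.beta.nc1}), just established in Proposition~\ref{propo:momr.beta.nc1}, expresses the same moment as a finite sum of $_1F_1$ functions.

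Since both expressions give the same quantity, I would equate the right-hand sides of Equations~(\ref{eq:momr.beta.nc1.def}) and~(\ref{eq:momr.beta.nc1}) and cancel the common prefactor $\mathbb{E}[\mbox{Beta}^{\, r}(\alpha_1,\alpha_2)] \, e^{-\lambda/2}$, which is strictly positive under the hypotheses $\alpha_1,\alpha_2>0$ and $\lambda\ge 0$. This yields
\begin{equation*}
{}_2F_2\!\left(\alpha_1+r,\alpha^+;\alpha_1,\alpha^++r;\frac{\lambda}{2}\right)=\sum_{i=0}^{r} \frac{{r \choose i}\left(\alpha^+\right)_i\left(\frac{\lambda}{2}\right)^i}{\left(\alpha_1\right)_i \left(\alpha^++r\right)_i} \, _1F_1\!\left(\alpha^++i;\alpha^++r+i;\frac{\lambda}{2}\right).
\end{equation*}
The parameter identification $a=\alpha_1$, $b=\alpha^+=\alpha_1+\alpha_2$, $n=r$, $x=\lambda/2$ then delivers Equation~(\ref{eq:id.hyperg.funct}). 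The constraint $b>a>0$ required in the statement corresponds exactly to the admissible range $\alpha_1>0$, $\alpha_2>0$ of the shape parameters of the $\mbox{\normalfont{NcB1}}$ model, so no parameters are lost in the translation. The condition $n\in\mathbb{N}$ matches the range $r\in\mathbb{N}$ on which both moment formulas are proved, and the condition $x>0$ corresponds to $\lambda>0$.

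No step is truly difficult here: the substantive analytic work has already been done in establishing Proposition~\ref{propo:momr.beta.dnc.rapp} (from which Proposition~\ref{propo:momr.beta.nc1} follows by setting $\lambda_2=0$), so the identity is essentially a bookkeeping consequence. The only subtlety worth mentioning is that the identity a priori is obtained only for $x$ of the form $\lambda/2$ with $\lambda>0$, but this is a dense subset of $(0,+\infty)$ and both sides are entire functions of $x$ (the generalized hypergeometric series on the left and the finite linear combination of $_1F_1$'s on the right both converge for every $x\in\mathbb{R}$, since their numerator parameters are balanced by equally many denominator parameters). Hence the identity is valid on the full range claimed in the statement, and the equality at $x=0$ is trivial because both sides reduce to $1$.
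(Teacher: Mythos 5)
Your proposal is correct and follows exactly the paper's own argument: equate the two moment formulas for the $\mbox{\normalfont{NcB1}}$ distribution in Equations~(\ref{eq:momr.beta.nc1.def}) and~(\ref{eq:momr.beta.nc1}), cancel the common positive prefactor, and identify $a=\alpha_1$, $b=\alpha^+$, $n=r$, $x=\lambda/2$. Your closing density/analytic-continuation remark is harmless but unnecessary, since every $x>0$ is already of the form $\lambda/2$ for the admissible choice $\lambda=2x>0$.
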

\begin{proof}
Equating the right-hand sides of Equations~(\ref{eq:momr.beta.nc1.def}) and~(\ref{eq:momr.beta.nc1}) and setting $\alpha_1=a$, $\alpha^+=b\left(>a\right)$, $r=n$ and $\frac{\lambda}{2}=x$ yield Equation~(\ref{eq:id.hyperg.funct}).
\end{proof}

Moreover, the general formula for the moments about zero of the NcB2 distribution in Equation~(\ref{eq:momr.beta.nc2}) can be recovered as the special case of Equation~(\ref{eq:momr.beta.dnc}) where $\lambda_1$ is set equal to 0 and $\lambda_2$ is renamed in $\lambda$. Finally, an interesting relationship applies among the means of the above mentioned types of Non-central Beta distributions.   

\begin{proposition}[Relationship among the means of the $\mbox{\normalfont{DNcB}}$, the $\mbox{\normalfont{NcB1}}$ and the $\mbox{\normalfont{NcB2}}$ distributions]
$$\mathbb{E}\left[\mbox{\normalfont{DNcB}}\left(\alpha_1,\alpha_2,\lambda_1,\lambda_2\right)\right]=\frac{\lambda_1}{\lambda^+} \, \mathbb{E}\left[\mbox{\normalfont{NcB1}}\left(\alpha_1,\alpha_2,\lambda^+\right)\right]+\frac{\lambda_2}{\lambda^+} \, \mathbb{E}\left[\mbox{\normalfont{NcB2}}\left(\alpha_1,\alpha_2,\lambda^+\right)\right] \, .$$
\end{proposition}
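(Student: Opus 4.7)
The plan is to exploit the alternative expression of the DNcB mean established in Equation~(\ref{eq:mom1.beta.dnc.improv}) and specialize it to the two Non-central Beta subfamilies. First I would observe that $\mbox{\normalfont{NcB1}}(\alpha_1,\alpha_2,\lambda^+)$ is precisely $\mbox{\normalfont{DNcB}}(\alpha_1,\alpha_2,\lambda^+,0)$, while $\mbox{\normalfont{NcB2}}(\alpha_1,\alpha_2,\lambda^+)$ coincides with $\mbox{\normalfont{DNcB}}(\alpha_1,\alpha_2,0,\lambda^+)$. A central point is that in both specializations the sum of the non-centrality parameters still equals $\lambda^+$, so the $_1F_1$ and exponential factors appearing in Equation~(\ref{eq:mom1.beta.dnc.improv}) are unchanged from those in the target DNcB mean.

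Abbreviating $A := e^{-\lambda^+/2}\,_1F_1(\alpha^+;\alpha^++1;\lambda^+/2)$, applying Equation~(\ref{eq:mom1.beta.dnc.improv}) in the two specializations gives $\mathbb{E}[\mbox{\normalfont{NcB1}}(\alpha_1,\alpha_2,\lambda^+)] = \tfrac{\alpha_1}{\alpha^+}A + (1-A)$ because the compositional ratio $\lambda_1/\lambda^+$ equals $1$, and $\mathbb{E}[\mbox{\normalfont{NcB2}}(\alpha_1,\alpha_2,\lambda^+)] = \tfrac{\alpha_1}{\alpha^+}A$ because that ratio equals $0$. Forming the convex combination $\tfrac{\lambda_1}{\lambda^+}\cdot\mathbb{E}[\mbox{\normalfont{NcB1}}] + \tfrac{\lambda_2}{\lambda^+}\cdot\mathbb{E}[\mbox{\normalfont{NcB2}}]$ and using $\lambda_1+\lambda_2 = \lambda^+$ to collect the coefficients of $A$ and of the constant term, one obtains $\tfrac{\alpha_1}{\alpha^+}A + \tfrac{\lambda_1}{\lambda^+}(1-A)$, which is exactly the right-hand side of Equation~(\ref{eq:mom1.beta.dnc.improv}).

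An arguably slicker alternative would proceed probabilistically via Proposition~\ref{propo:dncb.cond.ind}: conditioning $X' \sim \mbox{\normalfont{DNcB}}(\alpha_1,\alpha_2,\lambda_1,\lambda_2)$ on $M^+ \sim \mbox{\normalfont{Poisson}}(\lambda^+/2)$ and computing $\mathbb{E}[X'\mid M^+]$ from Equation~(\ref{eq:dncb.distr.cond.m}) via the binomial mean yields $\mathbb{E}[X'\mid M^+] = \tfrac{\alpha_1}{\alpha^++M^+} + \tfrac{\lambda_1}{\lambda^+}\cdot\tfrac{M^+}{\alpha^++M^+}$. Because the NcB1 and NcB2 cases correspond to the ratio $\lambda_1/\lambda^+$ taking the values $1$ and $0$ respectively while $M^+$ retains its $\mbox{\normalfont{Poisson}}(\lambda^+/2)$ distribution, taking the unconditional expectation of the linear combination settles the identity without invoking Proposition~\ref{propo:mom1.beta.dnc.improv} at all.

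I do not foresee any real obstacle beyond bookkeeping. The only delicate point is keeping straight which $\lambda^+$ on the right-hand side denotes the DNcB sum of non-centralities and which denotes the single non-centrality parameter fed into NcB1 and NcB2. Once that is fixed, the content of the statement is just that the DNcB mean depends on $(\lambda_1,\lambda_2)$ only through the total $\lambda^+$ and the compositional ratio $\lambda_1/\lambda^+$, and hence splits as a convex combination along that ratio.
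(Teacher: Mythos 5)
Your proof is correct, and it is worth separating your two routes. Your primary route is essentially the paper's argument seen through the rearranged mean formula: the paper works directly from Equation~(\ref{eq:mom1.beta.dnc}), writes $\frac{\lambda_1}{2}=\frac{\lambda_1}{\lambda^+}\cdot\frac{\lambda^+}{2}$ and $1=\frac{\lambda_1}{\lambda^+}+\frac{\lambda_2}{\lambda^+}$ to split the two-term expression into two brackets, and then recognizes those brackets as the $r=1$ cases of Equations~(\ref{eq:momr.beta.nc1}) and~(\ref{eq:momr.beta.nc2}) with $\lambda=\lambda^+$; you instead start from Equation~(\ref{eq:mom1.beta.dnc.improv}) and specialize the ratio $\lambda_1/\lambda^+$ to $1$ and $0$. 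These are the same bookkeeping in different coordinates, and your version makes the underlying structure — that the mean is affine in the compositional ratio $\lambda_1/\lambda^+$ with coefficients depending only on $\lambda^+$ — more transparent; the one point to state explicitly is that $\mbox{NcB2}(\alpha_1,\alpha_2,\lambda^+)=\mbox{DNcB}(\alpha_1,\alpha_2,0,\lambda^+)$, which the paper justifies just before the proposition. Your second, probabilistic route is genuinely different and arguably preferable: computing $\mathbb{E}\left[\left.X'\right|M^+\right]=\left(\alpha_1+\theta_1 M^+\right)/\left(\alpha^++M^+\right)$ from Equation~(\ref{eq:dncb.distr.cond.m}) shows that the conditional mean itself is affine in $\theta_1=\lambda_1/\lambda^+$ while the law of $M^+$ depends only on $\lambda^+$, so the identity follows by taking expectations term by term, with no recourse to the $_1F_1$ recurrences or to Proposition~\ref{propo:mom1.beta.dnc.improv}. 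Both of your routes (and the paper's) implicitly require $\lambda^+>0$, which is already forced by the statement itself.
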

\begin{proof}
The proof is straightforward once we observe that Equation~(\ref{eq:mom1.beta.dnc}) can be rewritten as follows:
\begin{eqnarray*}
\lefteqn{\mathbb{E}\left[\mbox{\normalfont{DNcB}}\left(\alpha_1,\alpha_2,\lambda_1,\lambda_2\right)\right]=}\\
& = & \frac{\alpha_1}{\alpha^+}\, e^{-\frac{\lambda^+}{2}} \, _1F_1\left(\alpha^+;\alpha^++1;\frac{\lambda^+}{2}\right)+e^{-\frac{\lambda^+}{2}} \frac{\frac{\lambda_1}{2}}{\alpha^++1} \, _1F_1\left(\alpha^++1;\alpha^++2;\frac{\lambda^+}{2}\right)=\\
& = & \frac{\lambda_1}{\lambda^+} \left[\frac{\alpha_1}{\alpha^+}\, e^{-\frac{\lambda^+}{2}} \, _1F_1\left(\alpha^+;\alpha^++1;\frac{\lambda^+}{2}\right)+e^{-\frac{\lambda^+}{2}} \frac{\frac{\lambda^+}{2}}{\alpha^++1} \, _1F_1\left(\alpha^++1;\alpha^++2;\frac{\lambda^+}{2}\right)\right]+\\
& + & \frac{\lambda_2}{\lambda^+}\left[\frac{\alpha_1}{\alpha^+}\, e^{-\frac{\lambda^+}{2}} \, _1F_1\left(\alpha^+;\alpha^++1;\frac{\lambda^+}{2}\right)\right] \, .
\end{eqnarray*}
\end{proof}

\section{Simulation results}
\label{sec:simul_res}

A simulation study aimed at confirming the validity of the derived moment formulas is clearly needed. This makes it necessary to generate from the two distributions at study. Specifically, the issue of simulating from the Non-central Chi-Squared and the Doubly Non-central Beta models is addressed herein by resorting to the mixture representation of the former in Equation~(\ref{eq:mixrepres.ncchisq}) and to the conditional density of the latter given $M^+$ in Equation~(\ref{eq:dncb.distr.cond.m}). More precisely, the algorithm to obtain a realization from $X' \sim \mbox{\normalfont{DNcB}}\left(\alpha_1,\alpha_2,\lambda_1,\lambda_2\right)$ using the aforementioned method requires to first generate from the random variable $M^+ \sim \mbox{Poisson}\left(\lambda^+/ \, 2\right)$ and then from the mixture of $M^++1$ Beta distributions referred to hereinabove. To sample from this latter mixture, one chooses an index $i^*$ from $\{0,\ldots,M^+\}$ according to the probabilities of the $\mbox{Binomial}\left(M^+,\lambda_1 \, / \, \lambda^+\right)$ distribution and then simulates a value from the corresponding $\mbox{Beta}\left(\alpha_1+i^*,\alpha_2+M^+-i^*\right)$ distribution. A graphical depiction of how the present algorithm works is displayed in Figure~\ref{fig:SIMUL}, which shows the histograms of the values simulated  from the $\mbox{DNcB}$ model together with the true densities of the latter for selected values of the shape parameters and the non-centrality parameters.

\begin{figure}[ht]
 \centering
 \subfigure
   {\includegraphics[width=6.6cm]{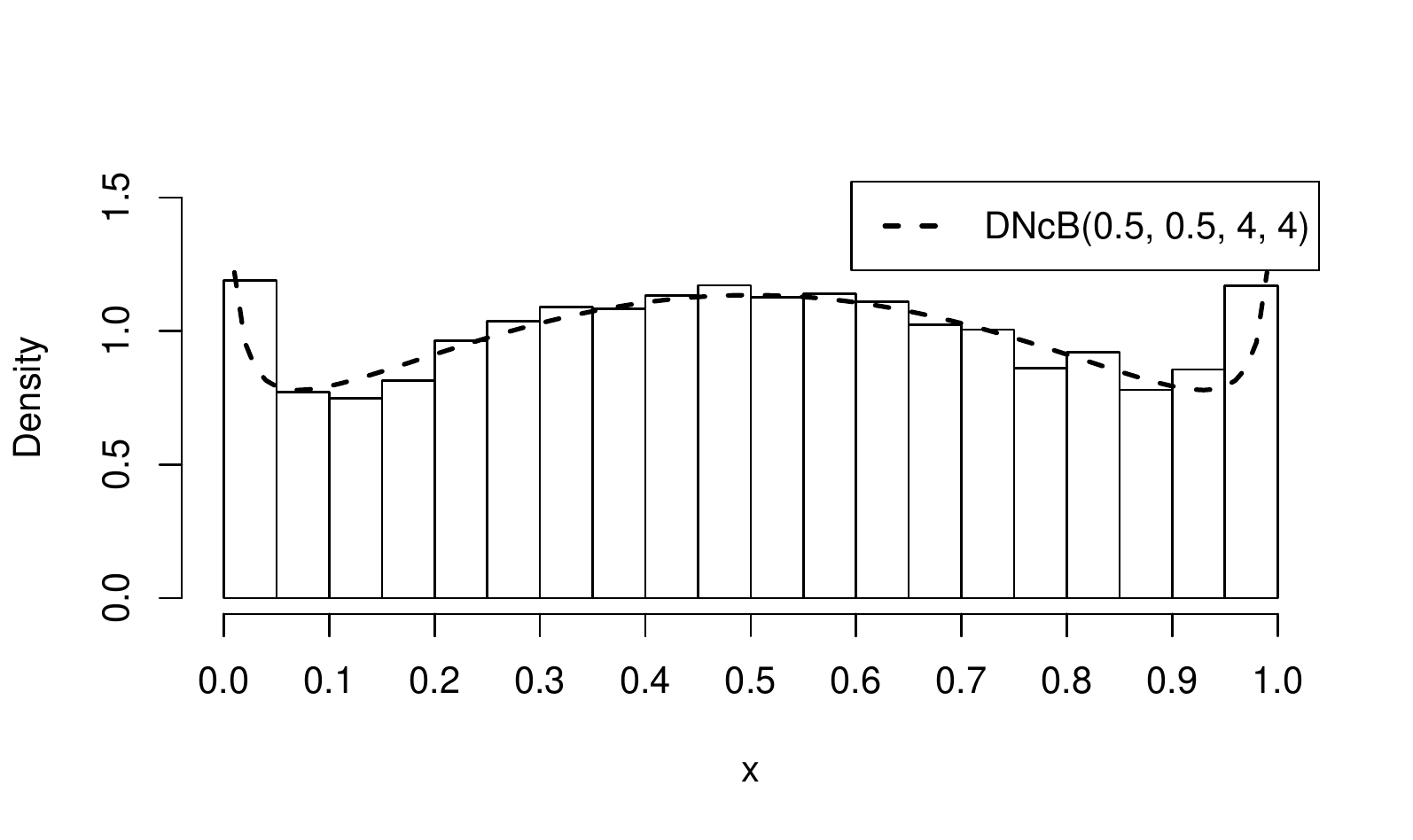}}
 \subfigure
   {\includegraphics[width=6.6cm]{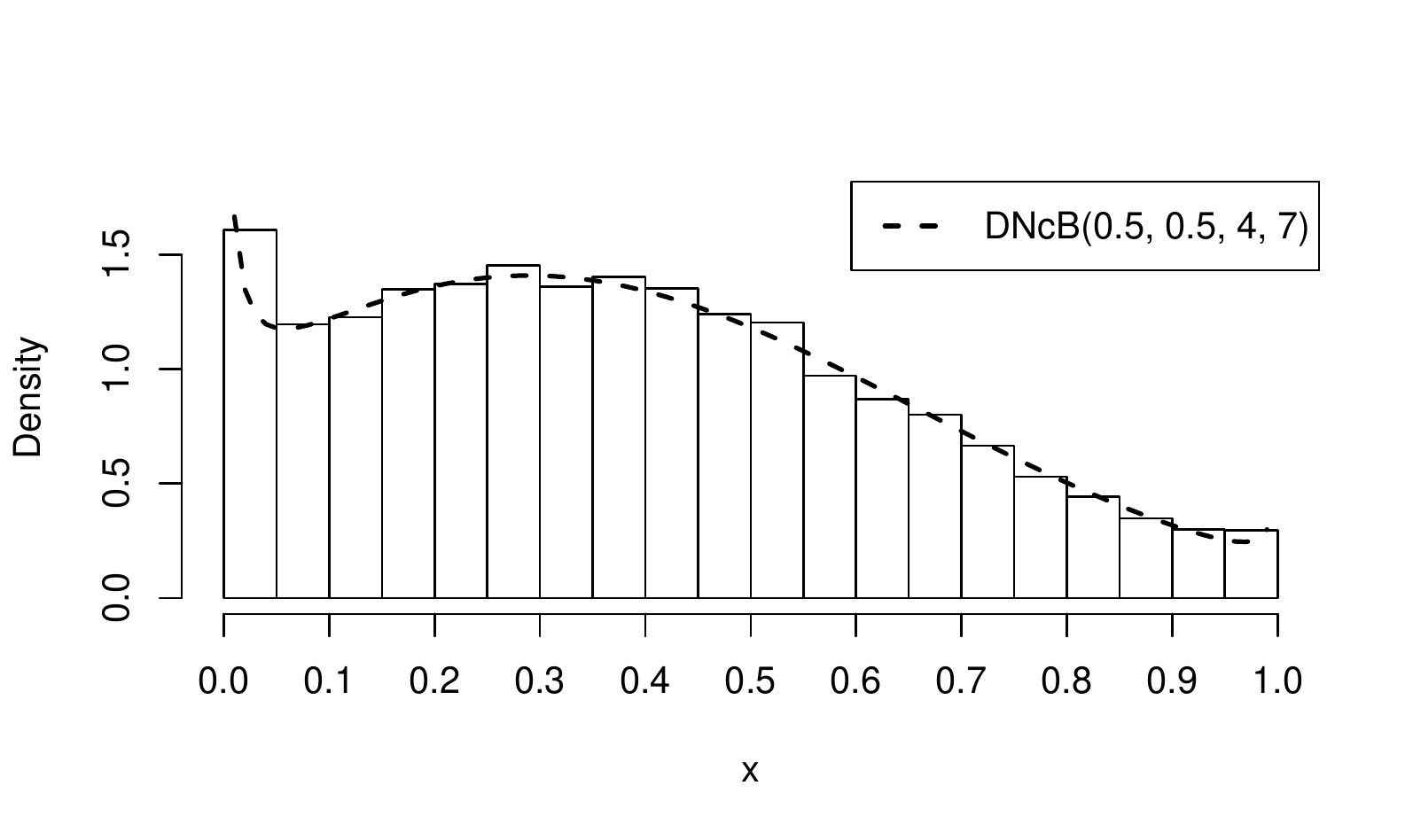}}
 \subfigure
   {\includegraphics[width=6.6cm]{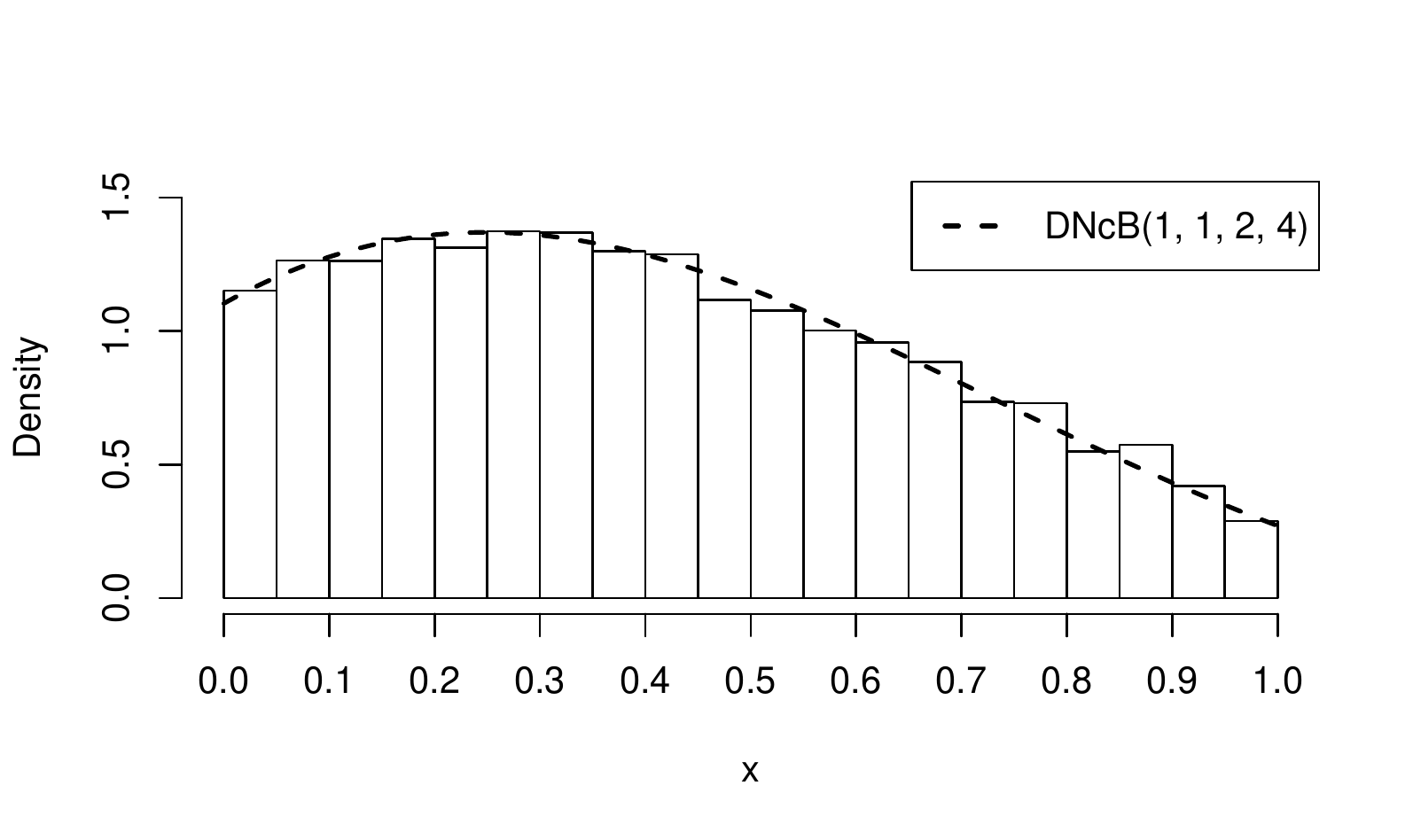}}
 \subfigure
   {\includegraphics[width=6.6cm]{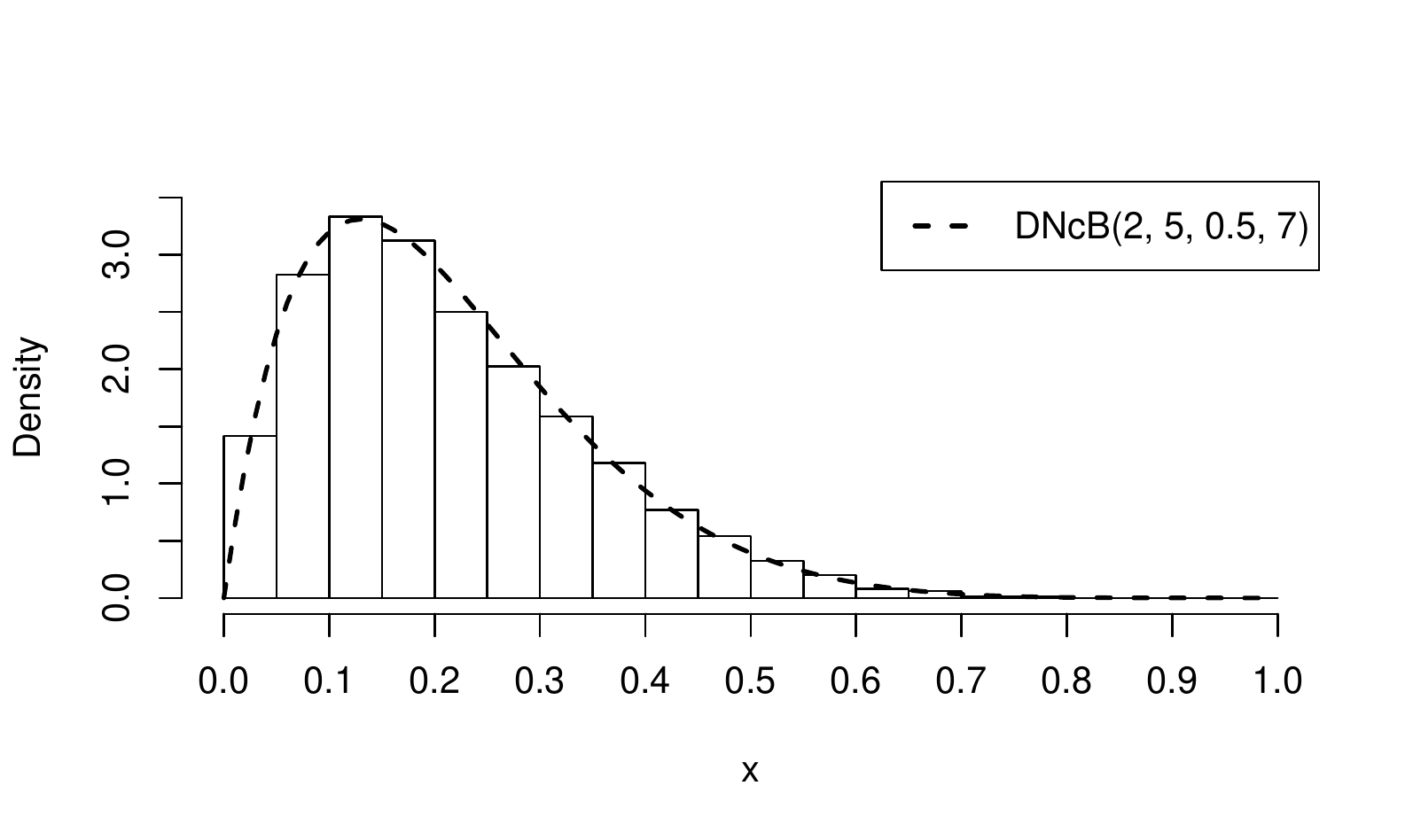}}
  \caption{Histograms of 10000 random draws simulated from the $\mbox{DNcB}(\alpha_1,\alpha_2,\lambda_1,\lambda_2)$ model by means of the conditional density of the latter given $M^+$ in Equation~(\ref{eq:dncb.distr.cond.m}) for the following values of the parameter vector: $\left(0.5,0.5,4,4\right)$ in the top left-hand panel, $\left(0.5,0.5,4,7\right)$ in the top right-hand panel, $\left(1,1,2,4\right)$ in the bottom left-hand panel and $\left(2,5,0.5,7\right)$ in the bottom right-hand panel; the plot of the true densities is superimposed in black.}
 \label{fig:SIMUL}
\end{figure}
That said, numerical validations of the derived formulas are produced by employing the following procedure. A sample of $n=30$ series of 10000 random draws is generated from each of the two models of interest for selected values of their respective parameters. The $r$-th descriptive moment $(r=1,2,3,4)$ is thus computed for each series; then, the sample mean and standard deviation of this latter quantity are assessed for every $r$. Hence, the null hypothesis that the true mean is equal to the value of the $r$-th moment about zero of the considered model is checked by using the two-tailed $Z$ test for large samples. The results obtained are listed in Table~\ref{tab:moment.test.ncchisq} for the $\chi'^{\,2}$ distribution and in Table~\ref{tab:moment.test.dncb} for the DNcB one. All the conclusions are clearly in favour of the non-rejection of the null hypothesis at study.

\begin{table}[ht]
\caption{Sample means ($\bar{x}$) and standard deviations ($s$) of the $r$-th descriptive moments ($r=1,2,3,4$) of $n=30$ series of 10000 random draws from the $\chi'^{\,2}_g \left(\lambda \right)$ distribution for selected values of its parameters and $p \, $-values of the two-tailed $Z$ test to check the null hypothesis that the true mean is equal to the $r$-th moment about zero of the model at study.}

\vspace{0.25cm}
\centering
\begin{tabular}{cc||c|c|c|c|c}
$g$ & $\lambda$ & $r$ & $r$-th moment & $\bar{x}$ & $s$ & $p-$value\\ \hline \hline
\multirow{4}{0.5cm}{ 2}  &   \multirow{4}{0.5cm}{ 4} & 1  &  6   &     6.00339      &   0.04723  & 0.69422\\
\cline{3-7} & &     \multicolumn{1}{c| }{2} & \multicolumn{1}{c| }{56}      & \multicolumn{1}{c| }{56.04660}     & \multicolumn{1}{c| }{0.90051} & \multicolumn{1}{c}{0.77684}\\ 
\cline{3-7} & &     \multicolumn{1}{c| }{3} & \multicolumn{1}{c| }{688}     & \multicolumn{1}{c| }{688.63642}   & \multicolumn{1}{c| }{19.44960} & \multicolumn{1}{c}{0.85776} \\ 
\cline{3-7} & &     \multicolumn{1}{c| }{4} & \multicolumn{1}{c| }{10368}   & \multicolumn{1}{c| }{10370.10193} & \multicolumn{1}{c| }{481.57720} & \multicolumn{1}{c}{0.98093}\\ \hline \hline
\multirow{4}{0.5cm}{4.5}  & \multirow{4}{0.5cm}{ 2} & 1  & 6.5  &     6.49742     &    0.03293  & 0.66783\\
\cline{3-7} & &     \multicolumn{1}{c| }{2} & \multicolumn{1}{c| }{59.25}      & \multicolumn{1}{c| }{59.23174}   & \multicolumn{1}{c| }{0.64579} & \multicolumn{1}{c }{0.87692}\\ 
\cline{3-7} & &     \multicolumn{1}{c| }{3} & \multicolumn{1}{c| }{690.125}    & \multicolumn{1}{c| }{689.83859}  & \multicolumn{1}{c| }{13.54150} & \multicolumn{1}{c}{0.90777} \\ 
\cline{3-7} & &     \multicolumn{1}{c| }{4} & \multicolumn{1}{c| }{9745.56250} & \multicolumn{1}{c| }{9730.61264} & \multicolumn{1}{c| }{323.74687} & \multicolumn{1}{c}{0.80033}\\ \hline \hline
\multirow{4}{0.5cm}{ 3}  & \multirow{4}{0.5cm}{1.5} & 1  & 4.5  &     4.49802     &    0.03401  & 0.74982\\
\cline{3-7} & &     \multicolumn{1}{c| }{2} & \multicolumn{1}{c| }{32.25}      & \multicolumn{1}{c| }{32.20308}   & \multicolumn{1}{c| }{0.53281} & \multicolumn{1}{c }{0.62957}\\ 
\cline{3-7} & &     \multicolumn{1}{c| }{3} & \multicolumn{1}{c| }{313.125}    & \multicolumn{1}{c| }{312.66438}  & \multicolumn{1}{c| }{10.25106} & \multicolumn{1}{c}{0.80559} \\ 
\cline{3-7} & &     \multicolumn{1}{c| }{4} & \multicolumn{1}{c| }{3812.0625}  & \multicolumn{1}{c| }{3812.18461} & \multicolumn{1}{c| }{229.93738} & \multicolumn{1}{c}{0.99768}\\ \hline \hline
\multirow{4}{0.5cm}{ 6}  & \multirow{4}{0.5cm}{3.5} & 1  & 9.5  &     9.49716     &    0.05716  & 0.78552\\
\cline{3-7} & &  \multicolumn{1}{c| }{2} & \multicolumn{1}{c| }{116.25}     & \multicolumn{1}{c| }{116.17581}  & \multicolumn{1}{c| }{1.44904} & \multicolumn{1}{c}{0.77915}\\ 
\cline{3-7} & &  \multicolumn{1}{c| }{3} & \multicolumn{1}{c| }{1730.375}   & \multicolumn{1}{c| }{1729.08064} & \multicolumn{1}{c| }{36.31869} & \multicolumn{1}{c}{0.84523} \\ 
\cline{3-7} & &  \multicolumn{1}{c| }{4} & \multicolumn{1}{c| }{30228.06250} & \multicolumn{1}{c| }{30198.26443} & \multicolumn{1}{c| }{988.70828} & \multicolumn{1}{c}{0.86889}
\end{tabular}
\small
\label{tab:moment.test.ncchisq}
\end{table}

\begin{table}[ht]
\caption{Sample means ($\bar{x}$) and standard deviations ($s$) of the $r$-th descriptive moments ($r=1,2,3,4$) of $n=30$ series of 10000 random draws from the $\mbox{\normalfont{DNcB}}\left(\alpha_1,\alpha_2,\lambda_1,\lambda_2\right)$ distribution for selected values of its parameters and $p \, $-values of the two-tailed $Z$ test to check the null hypothesis that the true mean is equal to the $r$-th moment about zero of the model at study.}

\vspace{0.25cm}
\centering
\begin{tabular}{cccc||c|c|c|c|c}
$\alpha_1$ & $\alpha_2$ & $\lambda_1$ & $\lambda_2$ & $r$ & $r$-th moment & $\bar{x}$ & $s$ & $p-$value\\ \hline \hline
\multirow{4}{0.5cm}{0.5} & \multirow{4}{0.5cm}{0.5} & \multirow{4}{0.5cm}{ 4} & \multirow{4}{0.5cm}{ 4} & 1 & 0.5 & 0.50011 & 0.00299 & 0.84031\\
\cline{5-9} & & & & \multicolumn{1}{c| }{2} & \multicolumn{1}{c| }{0.33013} & \multicolumn{1}{c| }{0.33011} & \multicolumn{1}{c| }{0.00305} & \multicolumn{1}{c}{0.97382}\\ 
\cline{5-9} & & & & \multicolumn{1}{c| }{3} & \multicolumn{1}{c| }{0.24519} & \multicolumn{1}{c| }{0.24509} & \multicolumn{1}{c| }{0.00287} & \multicolumn{1}{c}{0.84505} \\ 
\cline{5-9} & & & & \multicolumn{1}{c| }{4} & \multicolumn{1}{c| }{0.19516} & \multicolumn{1}{c| }{0.19500} & \multicolumn{1}{c| }{0.00268} & \multicolumn{1}{c}{0.75117}\\ \hline \hline
\multirow{4}{0.5cm}{0.5} & \multirow{4}{0.5cm}{0.5} & \multirow{4}{0.5cm}{ 4} & \multirow{4}{0.5cm}{ 7} & 1 & 0.38833 & 0.38830 & 0.00224 & 0.94458\\
\cline{5-9} & & & & \multicolumn{1}{c| }{2} & \multicolumn{1}{c| }{0.21345} & \multicolumn{1}{c| }{0.21334} & \multicolumn{1}{c| }{0.00196} & \multicolumn{1}{c }{0.76246}\\ 
\cline{5-9} & & & & \multicolumn{1}{c| }{3} & \multicolumn{1}{c| }{0.13759} & \multicolumn{1}{c| }{0.13751} & \multicolumn{1}{c| }{0.00176} & \multicolumn{1}{c}{0.79195} \\ 
\cline{5-9} & & & & \multicolumn{1}{c| }{4} & \multicolumn{1}{c| }{0.09788} & \multicolumn{1}{c| }{0.09784} & \multicolumn{1}{c| }{0.00161} & \multicolumn{1}{c}{0.87988}\\ \hline \hline
\multirow{4}{0.5cm}{ 1} & \multirow{4}{0.5cm}{ 1} & \multirow{4}{0.5cm}{ 2} & \multirow{4}{0.5cm}{ 4}  & 1  & 0.40925 & 0.40921 & 0.00220 & 0.91796\\
\cline{5-9} & & & & \multicolumn{1}{c| }{2} & \multicolumn{1}{c| }{0.23211} & \multicolumn{1}{c| }{0.23202} & \multicolumn{1}{c| }{0.00203} & \multicolumn{1}{c}{0.80964}\\ 
\cline{5-9} & & & & \multicolumn{1}{c| }{3} & \multicolumn{1}{c| }{0.15356} & \multicolumn{1}{c| }{0.15345} & \multicolumn{1}{c| }{0.00176} & \multicolumn{1}{c}{0.72559} \\ 
\cline{5-9} & & & & \multicolumn{1}{c| }{4} & \multicolumn{1}{c| }{0.11134} & \multicolumn{1}{c| }{0.11123} & \multicolumn{1}{c| }{0.00155} & \multicolumn{1}{c}{0.68663}\\ \hline \hline
\multirow{4}{0.5cm}{ 2} & \multirow{4}{0.5cm}{ 5} & \multirow{4}{0.5cm}{0.5} & \multirow{4}{0.5cm}{ 7} & 1  & 0.21392 & 0.21388 & 0.00140 & 0.86706\\
\cline{5-9} & & & & \multicolumn{1}{c| }{2} & \multicolumn{1}{c| }{0.06298} & \multicolumn{1}{c| }{0.06296} & \multicolumn{1}{c| }{0.00074} & \multicolumn{1}{c}{0.87554}\\ 
\cline{5-9} & & & & \multicolumn{1}{c| }{3} & \multicolumn{1}{c| }{0.02281} & \multicolumn{1}{c| }{0.02279} & \multicolumn{1}{c| }{0.00039} & \multicolumn{1}{c}{0.77032} \\ 
\cline{5-9} & & & & \multicolumn{1}{c| }{4} & \multicolumn{1}{c| }{0.00957} & \multicolumn{1}{c| }{0.00956} & \multicolumn{1}{c| }{0.00022} & \multicolumn{1}{c}{0.71113}
\end{tabular}
\small
\label{tab:moment.test.dncb}
\end{table}

The new formula for the moments of the Non-central Chi-Squared distribution in Equation~(\ref{eq:mom.ncchisq}) does not offer any particular advantage of a computational or numerical nature over the existing one in Equation~(\ref{eq:mom.literat.ncchisq}) but only brings an element of theoretical elegance into the study of the above mentioned model. On the contrary, the new formula for the moments of the DNcB distribution in Equation~(\ref{eq:momr.beta.dnc}) allows to overcome the disadvantages due to the infinite-series structure of the existing formula in Equation~(\ref{eq:momr.beta.dnc.oneser}). As a matter of fact, the former is computationally less demanding than the latter thanks to its finite-sum form. Specifically, the derived moment formula requires a less implementation effort and calls for a less considerable execution time to produce the desired results than the existing one. In this regard, the computational performances of the two formulas are compared in the following way. The overall amount of machine-time spent to compute the first four moments of the DNcB distribution by using both the derived and the existing formulas is measured $n=30$ times for each value of the parameter vector considered in Table~\ref{tab:moment.test.dncb}. Then, the sample mean and standard deviation of this latter quantity are computed in each case above. Hence, the null hypothesis that the true mean of the execution time of the new formula is not inferior to the one of the existing formula is checked by using the one-tailed $Z$ test for large samples. The results achieved are listed in Table~\ref{tab:moment.time.dncb}, from which we conclude that the new formula is beyond doubt to be preferred to the existing one for its major computational efficiency. More precisely, the findings obtained suggest that the execution time of the new formula is approximately of the order of 5 times faster than the one of the existing formula.

\begin{table}[ht]
\caption{Means ($\bar x$) and standard deviations ($s$) of the overall machine-time (in seconds) to compute the first four moments of the DNcB distribution by means of the derived formula (``Sum'') and the existing formula (``Series'') for selected values of the parameter vector and $p \, $-values of the one-tailed $Z$ test to check the null hypothesis $H_0: \, \mu_{Sum}-\mu_{Series} \, \geq\, 0$ ($n=30$).}

\vspace{0.25cm}
\centering
\begin{tabular}{cccc||c|c|c|c}
\multirow{2}{0.5cm}{$\alpha_1$}  & \multirow{2}{0.5cm}{$\alpha_2$} & \multirow{2}{0.5cm}{$\lambda_1$} & \multirow{2}{0.5cm}{$\lambda_2$} & \multicolumn{4}{c}{Time ($''$)} \\ 
\cline{5-8} &     &   &   &  Formula                         & $\bar{x}$                     & $s$                           & $p$-value \\ \hline \hline
\multirow{2}{0.5cm}{0.5} & \multirow{2}{0.5cm}{0.5} & \multirow{2}{0.25cm}{4} & \multirow{2}{0.25cm}{4} & Sum & 0.00567 & 0.00817 & \multirow{2}{1.3cm}{$<.0001$} \\
\cline{5-7} &     &     &   & \multicolumn{1}{c| }{Series} & \multicolumn{1}{c| }{0.03200} & \multicolumn{1}{c| }{0.01297} & \multicolumn{1}{c}{} \\ \hline \hline
\multirow{2}{0.5cm}{0.5} & \multirow{2}{0.5cm}{0.5} & \multirow{2}{0.25cm}{4} & \multirow{2}{0.25cm}{7} & Sum & 0.00533 & 0.00776 & \multirow{2}{1.3cm}{$<.0001$} \\
\cline{5-7} &     &     &   & \multicolumn{1}{c| }{Series} & \multicolumn{1}{c| }{0.03500} & \multicolumn{1}{c| }{0.01280} & \multicolumn{1}{c}{} \\ \hline \hline
\multirow{2}{0.25cm}{1} & \multirow{2}{0.25cm}{1} & \multirow{2}{0.25cm}{2} & \multirow{2}{0.25cm}{4} & Sum   & 0.00467 & 0.00730 & \multirow{2}{1.3cm}{$<.0001$}   \\
\cline{5-7} &     &     &   & \multicolumn{1}{c| }{Series} & \multicolumn{1}{c| }{0.02500} & \multicolumn{1}{c| }{0.01383} & \multicolumn{1}{c}{} \\ 
\hline \hline
\multirow{2}{0.25cm}{2} & \multirow{2}{0.25cm}{5} & \multirow{2}{0.5cm}{0.5} & \multirow{2}{0.25cm}{7} & Sum  & 0.00533 & 0.00819 & \multirow{2}{1.3cm}{$<.0001$}   \\
\cline{5-7} &     &     &   & \multicolumn{1}{c| }{Series} & \multicolumn{1}{c| }{0.01767} & \multicolumn{1}{c| }{0.00728} & \multicolumn{1}{c}{} 
\end{tabular}
\small
\label{tab:moment.time.dncb}
\end{table}

All the simulations and the procedures of interest have been carried out by means of routines implemented in the \texttt{R} environment. 

\section{Concluding remarks}
\label{sec:concl}

In the present paper the problem of computing the $r$-th moment about zero of the Non-central Chi-Squared distribution was faced by exploiting a conditional approach based on the mixture representation of such a distribution together with a new expansion of the ascending factorial of a binomial. A similar approach enabled the derivation of a new formula for the $r$-th moment about zero of the Doubly Non-central Beta distribution, i.e. a generalization of the Beta model for the definition of which the Non-central Chi-Squared distribution represents the main ingredient. While the new formula for the moments of the Non-central Chi-Squared distribution only brings an element of theoretical elegance into the study of this model, the new formula for the moments of the Doubly Non-central Beta distribution, thanks to its finite-sum form, guarantees a greater computational efficiency than the existing one, the latter having an infinite-series structure. We hope that the approaches discussed and the results obtained may attract wider applications in statistics.


\end{document}